\newtheorem{theorem}{Theorem}
\begin{document}
%
% paper title
% Titles are generally capitalized except for words such as a, an, and, as,
% at, but, by, for, in, nor, of, on, or, the, to and up, which are usually
% not capitalized unless they are the first or last word of the title.
% Linebreaks \\ can be used within to get better formatting as desired.
% Do not put math or special symbols in the title.
\title{Stochastic Linearization of Multivariate Nonlinearities}

% author names and affiliations
% use a multiple column layout for up to three different
% affiliations
\author{\IEEEauthorblockN{Sarnaduti Brahma and Hamid R. Ossareh}
\IEEEauthorblockA{Department of Electrical and Biomedical Engineering\\
The University of Vermont\\
Burlington, Vermont 05401\\
Email: \url{{sbrahma,hossareh}@uvm.edu}}}
%\and
%\IEEEauthorblockN{Hamid R. Ossareh}
%\IEEEauthorblockA{Twentieth Century Fox\\
%Springfield, USA\\
%Email: homer@thesimpsons.com}
%\and
%\IEEEauthorblockN{James Kirk\\ and Montgomery Scott}
%\IEEEauthorblockA{Starfleet Academy\\
%San Francisco, California 96678--2391\\
%Telephone: (800) 555--1212\\
%Fax: (888) 555--1212}}

% conference papers do not typically use \thanks and this command
% is locked out in conference mode. If really needed, such as for
% the acknowledgment of grants, issue a \IEEEoverridecommandlockouts
% after \documentclass

% for over three affiliations, or if they all won't fit within the width
% of the page, use this alternative format:
% 
%\author{\IEEEauthorblockN{Michael Shell\IEEEauthorrefmark{1},
%Homer Simpson\IEEEauthorrefmark{2},
%James Kirk\IEEEauthorrefmark{3}, 
%Montgomery Scott\IEEEauthorrefmark{3} and
%Eldon Tyrell\IEEEauthorrefmark{4}}
%\IEEEauthorblockA{\IEEEauthorrefmark{1}School of Electrical and Computer Engineering\\
%Georgia Institute of Technology,
%Atlanta, Georgia 30332--0250\\ Email: see http://www.michaelshell.org/contact.html}
%\IEEEauthorblockA{\IEEEauthorrefmark{2}Twentieth Century Fox, Springfield, USA\\
%Email: homer@thesimpsons.com}
%\IEEEauthorblockA{\IEEEauthorrefmark{3}Starfleet Academy, San Francisco, California 96678-2391\\
%Telephone: (800) 555--1212, Fax: (888) 555--1212}
%\IEEEauthorblockA{\IEEEauthorrefmark{4}Tyrell Inc., 123 Replicant Street, Los Angeles, California 90210--4321}}

% use for special paper notices
%\IEEEspecialpapernotice{(Invited Paper)}

% make the title area
\maketitle

% As a general rule, do not put math, special symbols or citations
% in the abstract
\begin{abstract}
Stochastic linearization is a method used in Quasilinear Control (QLC) to replace a nonlinearity by an equivalent gain and a bias, utilizing the statistical properties of random inputs. In this paper, the theory of stochastic linearization is extended to nonlinear functions of multiple variables or inputs forming a multivariate Gaussian vector. The result is applied to find the stochastic linearization of a bivariate saturation nonlinearity in a general feedback control system. The accuracy of stochastic linearization has been investigated by a Monte Carlo simulation and has been found out to be fairly high. Finally, a practical example of optimal control design using QLC is presented.
\end{abstract}

% no keywords

% For peer review papers, you can put extra information on the cover
% page as needed:
% \ifCLASSOPTIONpeerreview
% \begin{center} \bfseries EDICS Category: 3-BBND \end{center}
% \fi
%
% For peerreview papers, this IEEEtran command inserts a page break and
% creates the second title. It will be ignored for other modes.
\IEEEpeerreviewmaketitle

\section{Introduction}
Actuators and sensors in control systems are often nonlinear. While plants are generally nonlinear as well, they can usually be linearized around an operating point if the control system is well designed. The nonlinear instrumentation, i.e., the actuators and sensors, however, cannot. This is because external random inputs to the system may force them to operate far from their designed operating point, activating nonlinearities in them.

Quasilinear Control (QLC) is a set of methods that can be used to analyze and design control systems with nonlinear actuators and sensors \cite{Ching2010}. It leverages the method of stochastic linearization, which replaces each nonlinearity by an equivalent gain and a bias, based on statistical properties of the stochastic inputs. Consider a nonlinear function $f\left(x\right)=x^2$, as shown in Fig. \ref{fig:The-Approach-of}. The traditional (Jacobian) approach in  linearizing such a function is to find the derivative of the function at a suitable operating point, replace the nonlinearity by a linear approximation and shift the origin at that operating point. The method of stochastic linearization, on the other hand, is based on minimizing the expected value of the mean squared error between the nonlinear function and its stochastically linearized approximation, taking into account the probability distribution of the input. This approach is graphically depicted in Fig. \ref{fig:The-Approach-of}, in which the input $x$ has been assumed to follow a Normal distribution with mean 3 and standard deviation 2. The dashed lines represent two Jacobian linearizations performed at (1,1) and (4,16) and the dotted line represents the stochastically linearized approximation. If Jacobian linearization is performed at (1,1), but the operating point shifts to, say, (4,16), the linearization becomes highly inaccurate. However, since stochastic linearization considers the expected value of the derivatives around $x=3$, it performs better at (4,16). 

\begin{figure}[t]
	\begin{centering}
		\includegraphics[width=0.9\columnwidth]{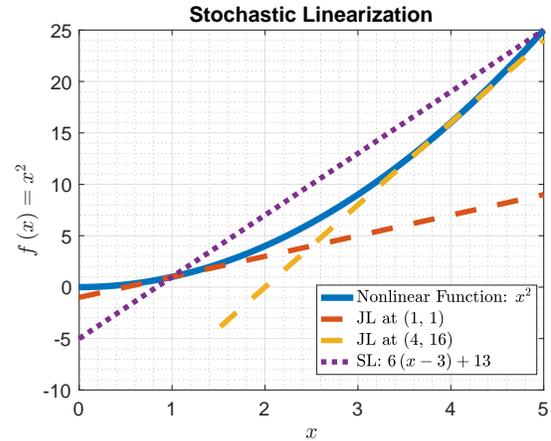}
		\par\end{centering}
	\caption{The Approach of Stochastic Linearization\label{fig:The-Approach-of}}
\end{figure}

The theory of stochastic linearization and quasilinear control has been developed only for nonlinear actuators and sensors having a single input. In practical applications, however, more than just one factor or input affect the performance of a nonlinear actuator, and hence the operation of the control system. As a case in point, the authors are involved in a US Department of Energy Project called ENERGIZE, whose goal is to develop robust and resilient real-time control systems with uncertain distributed energy resources. In such a renewable energy system, that involves aggregation of several distributed energy resources that can both produce and consume electric power, it is often necessary to compute the optimal power set point for these aggregated devices to ensure a robust and resilient operation. It is then desirable to estimate the power limits of such resources. Since the power limits would depend on the number and type of devices being aggregated, both of which are random phenomena, depending on when users decide to turn the devices on or off, the saturation authority of the actuator can be considered to be a stochastic process in such a case.

In this paper, the theory of stochastic linearization is extended to a nonlinear function of multiple variables, such that the inputs form a wide sense stationary (WSS) multivariate Gaussian random vector. The outline is as follows: Section \ref{rsv} provides a brief review of single variable QLC. Section \ref{msl} introduces expressions for stochastically linearizing a generic multivariate nonlinearity. In Section \ref{abs}, the bivariate saturation nonlinearity is introduced and expressions for its equivalent gains and bias derived, using the result of the previous Section. The result is used in Section \ref{qlccl} to find the stochastic linearization of a general feedback control system, in which the actuator is a bivariate saturation, with the randomness in the bounds modeled as a second input to the actuator, taking the reference and disturbance signals to be WSS Gaussian random processes with specified means and standard deviations. Section \ref{cSQL} explores special features of multi-variable QLC compared to single-variable QLC, specifically the effect of the second actuator input and correlation between the actuator inputs. Section \ref{asl} investigates the accuracy of stochastic linearization by a Monte Carlo simulation with different possible input and system parameters. In Section \ref{pe}, a practical example of an optimal controller design has been provided. Section \ref{conc} concludes the paper. In the appendix, series expansions of the integrals in Section \ref{qlccl} have been derived, along with their region of convergence, and an algorithm presented for their calculation.

\section{Review of Single Variable Stochastic Linearization}\label{rsv}
This section presents a brief review of single variable stochastic linearization. For details, please refer to \cite{Ching2010}.

\subsection{Open Loop System}\label{ols}
Consider a single input single output (SISO) system shown in Fig. \ref{fig:p2} driven by a wide-sense stationary Gaussian stochastic input $u(t)$, such that it is governed by the input-output relationship:
\[v(t)=f(u(t))\]
Stochastic linearization replaces the above nonlinearity by a linear approximation $Nu_0(t)+M$, such that the functional:
\[
\epsilon\left(N,M\right)=E\left\{ \left[f\left(u\left(t\right)\right)-Nu_0\left(t\right)-M\right]^{2}\right\} 
\] is minimized \cite{Kabamba2015}. Here $N$ is called the \textit{quasilinear gain}, $M$ the \textit{quasilinear bias} and $u_0(t)$ is the zero-mean part of $u(t)$. It can be shown that the values of $N$ and $M$ are:
\begin{equation}\label{neqs}
N=E[f'(u)]
\end{equation}
\begin{equation}\label{meqs}
M=E[f(u)]
\end{equation}
\begin{figure}
	\centering
	\includegraphics[width=1\linewidth]{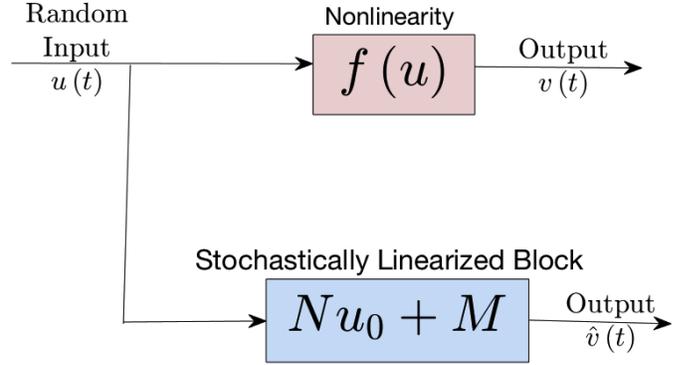}
	\caption{The process of Single Variable Stochastic Linearization: Here $N$ is the quasilinear gain defined in \eqref{neqs} and $M$ the quasilinear bias defined in \eqref{meqs} and $u_0(t)$ is the zero-mean part of $u(t)$.}
	\label{fig:p2}
\end{figure}

\subsection{Closed Loop System}\label{clssv}
Consider a general feedback control system shown in Fig. \ref{fig:Block-Diagram-ofs}. It consists of a plant $P\left(s\right)$ whose output is desired to be controlled using a controller $C\left(s\right)$ and a nonlinear actuator described by the function $f\left(\cdot\right)$. The signal $r(t)$ is the reference signal to be tracked and $d(t)$ is the disturbance. $r(t)$ is generated by passing white noise $w_r(t)$ through a coloring filter $F_{\Omega_r}(s)$, scaling it by $\sigma_r$, and adding a bias $\mu_r$. Similarly, $d(t)$ is generated by passing white noise $w_d(t)$ through a coloring filter $F_{\Omega_d}(s)$, scaling it by $\sigma_d$, and adding a bias $\mu_d$. The $H_2$-norm of the coloring filters are considered to be unity, which allows the reference and disturbance signals to have means $\mu_r$ and $\mu_d$ respectively, along with corresponding standard deviations $\sigma_r$ and $\sigma_d$. The block diagram shows the corresponding state space representations. The nonlinearity, $f(\cdot)$ can be replaced by a stochastically linearized block $Nu_0(t)+M$, where $u_0(t)$ is the zero-mean part of $u(t)$ and $N$ and $M$ are as in \eqref{neqs} and \eqref{meqs} respectively. To calculate $N$ and $M$, we need to consider the entire system, along with the statistics of the input signals. 

This is the general idea behind stochastically linearizing a nonlinear control system with a single input to the nonlinearity. Since it is a special case of multivariate stochastic linearization, which will be described in detail in Section \ref{qlccl}, we will not review its details here.

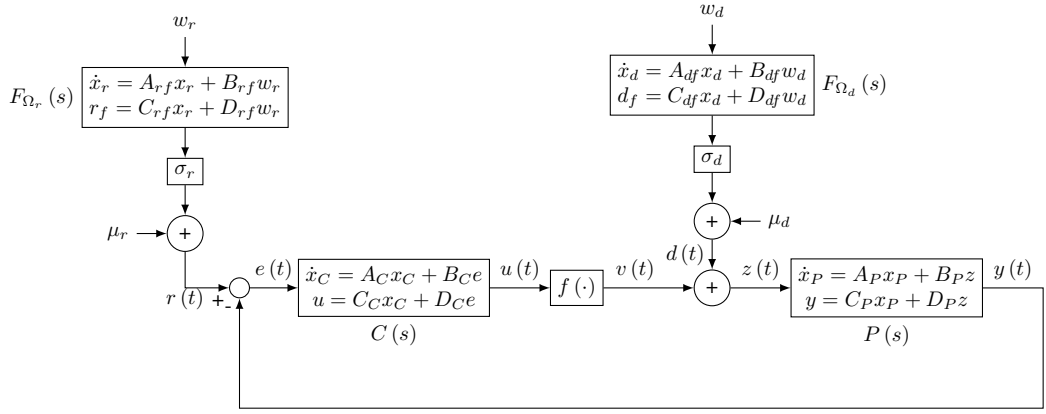
\begin{figure*}[t]
	\centering
	\begin{tikzpicture}[>=latex,scale=0.8,every node/.style={transform shape}, every text node part/.style={align=center}]
	\node (wr) at (0,0){$w_r$};
	\node[rectangle,draw] (rf) [below=0.5 of wr,label={left:$F_{\Omega_r}\left(s\right)$}]{$\dot{x}_r=A_{rf}x_r+B_{rf}w_r$\\$r_f=C_{rf}x_r+D_{rf}w_r$};
	\node[rectangle,draw] (sr) [below=0.5 of rf]{$\sigma_r$};
	\node[circle,draw] (cr) [below=0.5 of sr]{+};
	\node (mur) [left=0.5 of cr]{$\mu_r$};
	\node[circle,draw] (sum) [below right=0.8 of cr]{};
	\node[rectangle,draw] (C) [right=0.8 of sum,label={below:$C\left(s\right)$}]{$\dot{x}_C=A_{C}x_C+B_{C}e$\\$u=C_{C}x_C+D_Ce$};
	\node[rectangle,draw] (sat) [right= of C]{$f\left(\cdot\right)$};
	\node[circle,draw] (dsum) [right=1.5 of sat]{+};

	\node[circle,draw] (cd) [above=0.5 of dsum]{+};
	\node (mud) [right=0.5 of cd]{$\mu_{d}$};
	\node[rectangle,draw] (sd) [above=0.5 of cd]{$\sigma_{d}$};
	\node[rectangle,draw] (df) [above=0.5 of sd,label={right:$F_{\Omega_d}\left(s\right)$}]{$\dot{x}_{d}=A_{df}x_{d}+B_{df}w_{d}$\\${d}_f=C_{df}x_{d}+D_{df}w_{d}$};
	\node (wd) [above=0.5 of df]{$w_{d}$};
	
	\node[rectangle,draw] (P) [right= of dsum,label={below:$P\left(s\right)$}]{$\dot{x}_P=A_{P}x_P+B_{P}z$\\$y=C_{P}x_P+D_Pz$};
	\node[coordinate] (y) [right= of P]{};
	
	\draw[->] (P)--node[above]{$y\left(t\right)$}(y)--($(y)+(0,-2)$)-|node[pos=0.95,left]{-}(sum);
	\draw[->] (wr)--(rf);
	\draw[->] (rf)--(sr);
	\draw[->] (sr)--(cr);
	\draw[->] (mur)--(cr);
	\draw[->] (cr)|-node[pos=0.4,below]{$r\left(t\right)$}node[very near end,below]{+}(sum);
	\draw[->] (wd)--(df);
	\draw[->] (df)--(sd);
	\draw[->] (sd)--(cd);
	\draw[->] (mud)--(cd);
	\draw[->] (cd)--node[left]{$d\left(t\right)$}(dsum);
	\draw[->] (sum)--node[above]{$e\left(t\right)$}(C);
	\draw[->] (C)--node[above]{$u\left(t\right)$}(sat);
	\draw[->] (sat)--node[above,pos=0.35]{$v\left(t\right)$}(dsum);
	\draw[->] (dsum)--node[above]{$z\left(t\right)$}(P);
	\end{tikzpicture}
	\caption{Block Diagram of Control System\label{fig:Block-Diagram-ofs}}
    \label{figsv}
\end{figure*}

\section{Multivariate Stochastic Linearization}\label{msl}
Consider a multi-input single output (MISO) system driven by $n$ wide-sense stationary Gaussian inputs $u_1(t),u_2(t),\dots,u_n(t)$, forming a Gaussian random vector $\mathbf{u}(t)$, and modeled by a multivariate nonlinearity $v(t)=f(\mathbf{u}(t))$, where $v(t)$ is the output. The problem is to find a linear approximation to this nonlinearity. Various objective functions have been suggested in the literature for minimizing the error introduced by a possible linear approximation \cite{Iwan1972,Smith1966}, but it has been found that, in general, the mean squared error between the nonlinearity and its linear approximation gives results as good as, if not better, than others \cite{Spanos}. Hence, it is used in the following Theorem to derive the linear approximation.

\begin{theorem}
	Let $u_{1}\left(t\right)$,$u_{2}\left(t\right)$,$\ldots$,$u_{n}\left(t\right)$
			be $n$ WSS jointly Gaussian processes with expected values $\mu_{1}\left(t\right),\mu_{2}\left(t\right),\ldots,\mu_{n}\left(t\right)$
			respectively, $\mathbf{u}\left(t\right)=\left[\begin{array}{cccc}
			u_{1}\left(t\right) & u_{2}\left(t\right) & \ldots & u_{n}\left(t\right)\end{array}\right]^{T}$, $\boldsymbol{\mu}\left(t\right)=~\left[\begin{array}{cccc}
			\mu_{1}\left(t\right) & \mu_{2}\left(t\right) & \ldots & \mu_{n}\left(t\right)\end{array}\right]^{T}$ and $\mathbf{u_{0}}\left(t\right)=~\mathbf{u}\left(t\right)-\boldsymbol{\mu}\left(t\right)$.
			Then, for any piecewise differentiable function $f\left(\mathbf{u}\right):\mathbb{R}^{n}\rightarrow\mathbb{R}$,
			the functional
			\[
			\epsilon\left(\mathbf{N},M\right)=E\left\{ \left[f\left(\mathbf{u}\left(t\right)\right)-\mathbf{N^{T}}\mathbf{u_{0}}\left(t\right)-M\right]^{2}\right\} 
			\]
			is minimized by:
			\begin{equation}\label{neq}
			\mathbf{N}=E\left[\mathbf{\boldsymbol{\nabla}}f\left(\mathbf{u}\left(t\right)\right)\right]
			\end{equation}
			\begin{equation}\label{meqd}
			M=E\left[f\left(\mathbf{u}\left(t\right)\right)\right]
			\end{equation}
		where $\mathbf{N}=\left[\begin{array}{cccc}
			N_{1} & N_{2} & \ldots & N_{n}\end{array}\right]^{T}$ is a constant vector.
\end{theorem}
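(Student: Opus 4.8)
The plan is to treat $\epsilon\left(\mathbf{N},M\right)$ as a convex quadratic form in the unknowns $\left(\mathbf{N},M\right)$ and to locate its unique stationary point. First I would expand the square and use the fact that $\mathbf{u_{0}}\left(t\right)$ is zero-mean to discard the cross term $E\left[\mathbf{N^{T}}\mathbf{u_{0}}\right]$, obtaining
\[
\epsilon\left(\mathbf{N},M\right)=E\left[f^{2}\right]-2\mathbf{N^{T}}E\left[f\,\mathbf{u_{0}}\right]-2M\,E\left[f\right]+\mathbf{N^{T}}\Sigma\mathbf{N}+M^{2},
\]
where $\Sigma=E\left[\mathbf{u_{0}}\mathbf{u_{0}^{T}}\right]$ is the covariance matrix, constant in $t$ by wide-sense stationarity and assumed nonsingular. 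Setting $\partial\epsilon/\partial M=0$ gives $M=E\left[f\left(\mathbf{u}\left(t\right)\right)\right]$ at once, which is \eqref{meqd}. Setting $\partial\epsilon/\partial\mathbf{N}=\mathbf{0}$ gives the normal equations $\Sigma\mathbf{N}=E\left[f\,\mathbf{u_{0}}\right]$, hence $\mathbf{N}=\Sigma^{-1}E\left[f\left(\mathbf{u}\left(t\right)\right)\mathbf{u_{0}}\left(t\right)\right]$. Since the Hessian of $\epsilon$ is block diagonal with blocks $2\Sigma\succ0$ and the positive scalar $2$, this stationary point is the global minimizer, so it only remains to rewrite $\mathbf{N}$ in the claimed form.

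The crux is therefore to show that $\Sigma^{-1}E\left[f\,\mathbf{u_{0}}\right]=E\left[\boldsymbol{\nabla}f\left(\mathbf{u}\left(t\right)\right)\right]$, i.e.\ to establish the multivariate Stein/Price identity $E\left[\mathbf{u_{0}}\left(t\right)f\left(\mathbf{u}\left(t\right)\right)\right]=\Sigma\,E\left[\boldsymbol{\nabla}f\left(\mathbf{u}\left(t\right)\right)\right]$. I would prove this by writing the left-hand side as $\int_{\mathbb{R}^{n}}\left(\mathbf{u}-\boldsymbol{\mu}\right)f\left(\mathbf{u}\right)\phi\left(\mathbf{u}\right)d\mathbf{u}$, with $\phi$ the $\mathcal{N}\left(\boldsymbol{\mu},\Sigma\right)$ density, exploiting the identity $\boldsymbol{\nabla}\phi\left(\mathbf{u}\right)=-\Sigma^{-1}\left(\mathbf{u}-\boldsymbol{\mu}\right)\phi\left(\mathbf{u}\right)$ to replace $\left(\mathbf{u}-\boldsymbol{\mu}\right)\phi$ by $-\Sigma\,\boldsymbol{\nabla}\phi$, and then integrating by parts coordinate-wise so the derivative falls on $f$; the Gaussian tail kills the boundary terms, leaving $\Sigma\int\left(\boldsymbol{\nabla}f\right)\phi\,d\mathbf{u}=\Sigma\,E\left[\boldsymbol{\nabla}f\right]$. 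Substituting back yields $\mathbf{N}=\Sigma^{-1}\Sigma\,E\left[\boldsymbol{\nabla}f\right]=E\left[\boldsymbol{\nabla}f\left(\mathbf{u}\left(t\right)\right)\right]$, which is \eqref{neq}.

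The main obstacle I anticipate is the integration-by-parts step under the hypothesis that $f$ is only piecewise differentiable: one must partition $\mathbb{R}^{n}$ along the measure-zero surfaces where $f$ fails to be differentiable, apply the divergence theorem on each piece, and verify that the interior interface contributions cancel while the behavior at infinity is controlled by mild growth conditions on $f$ and its partials against the Gaussian weight (so that $\epsilon$, and the relevant expectations, are finite to begin with). A secondary point worth making explicit is the nondegeneracy of $\Sigma$, which is needed to invert the normal equations; if $\Sigma$ is singular then $\mathbf{N}$ is pinned down only modulo the null space of $\Sigma$, but every solution of $\Sigma\mathbf{N}=E\left[f\,\mathbf{u_{0}}\right]$ produces the same quasilinear approximation on the support of $\mathbf{u_{0}}\left(t\right)$, and the choice $\mathbf{N}=E\left[\boldsymbol{\nabla}f\right]$ continues to be valid.
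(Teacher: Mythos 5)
Your proposal is correct and follows essentially the same route as the paper: both reduce the problem to the first-order (normal) equations of a convex quadratic in $\left(\mathbf{N},M\right)$ and then invoke the multivariate Gaussian gradient identity $E\left[\mathbf{u_{0}}f\right]=\Sigma\,E\left[\boldsymbol{\nabla}f\right]$ to turn $\Sigma^{-1}E\left[f\,\mathbf{u_{0}}\right]$ into $E\left[\boldsymbol{\nabla}f\right]$ --- the paper simply cites this identity from Kazakov, whereas you sketch an integration-by-parts proof of it. Your additional checks (the positive-definite Hessian confirming a global minimum, and the remark on singular $\Sigma$) go slightly beyond the paper, which only verifies the stationarity conditions and assumes $\mathrm{cov}\left[\mathbf{u_{0}}\right]$ positive definite.
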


\begin{proof}
	To minimize $\epsilon\left(\mathbf{N},M\right)$, we set $\frac{\partial\epsilon\left(\mathbf{N},M\right)}{\partial\mathbf{N}}=\mathbf{0}$
	and $\frac{\partial\epsilon\left(\mathbf{N},M\right)}{\partial M}=0$. The term $\frac{\partial\epsilon\left(\mathbf{N,M}\right)}{\partial\mathbf{N}}$
	is calculated as follows. Let $\mathbf{u_{0}}\left(t\right)=\left[\begin{array}{cccc}
	u_{01}\left(t\right) & u_{02}\left(t\right) & \ldots & u_{0n}\left(t\right)\end{array}\right]^{T}$. Then for any $k\in\mathbb{N},\,1\leq k\leq n$,
	
		\[\frac{\partial\epsilon\left(\mathbf{N},M\right)}{\partial N_{k}}\]
   {\small \begin{eqnarray*}
		& = & E\left\{ \frac{\partial}{\partial N_{k}}\left[f\left(\mathbf{u}\left(t\right)\right)-\mathbf{N^{T}}\mathbf{u_{0}}\left(t\right)-M\right]^{2}\right\} \\
		& = & E\left\{ 2\left[f\left(\mathbf{u}\left(t\right)\right)-\mathbf{N^{T}u_{0}}\left(t\right)-M\right]\left[-\frac{\partial}{\partial N_{k}}\left(\mathbf{N^{T}u_{0}}\left(t\right)\right)\right]\right\} \\
		& = & E\left\{ 2\left[f\left(\mathbf{u}\left(t\right)\right)-\mathbf{N^{T}u_{0}}\left(t\right)-M\right]\left[-\frac{\partial}{\partial N_{k}}\sum_{i=1}^{n}N_{i}u_{0i}\left(t\right)\right]\right\} \\
		& = & E\left\{ 2\left[f\left(\mathbf{u}\left(t\right)\right)-\mathbf{N^{T}u_{0}}\left(t\right)-M\right]\left[-u_{0k}\left(t\right)\right]\right\} 
	\end{eqnarray*}}
\[\therefore\frac{\partial\epsilon\left(\mathbf{N},M\right)}{\partial N_{k}}\]
	{
		\begin{eqnarray*}
			& = & -2E\left[u_{0k}\left(t\right)f\left(\mathbf{u}\left(t\right)\right)\right]+2E\left[u_{0k}\left(t\right)\mathbf{N^{T}u_{0}}\left(t\right)\right]\\
			& & +2E\left[u_{0k}\left(t\right)M\right]\\
			& = & -2E\left[u_{0k}\left(t\right)f\left(\mathbf{u}\left(t\right)\right)\right]+2E\left[u_{0k}\left(t\right)\sum_{i=1}^{n}N_{i}u_{0i}\left(t\right)\right]\\
			& & +2E\left[u_{0k}\left(t\right)\right]M\\
			& = & -2E\left[u_{0k}\left(t\right)f\left(\mathbf{u}\left(t\right)\right)\right]+2\sum_{i=1}^{n}N_{i}E\left[u_{0k}\left(t\right)u_{0i}\left(t\right)\right]
		\end{eqnarray*}
	}
	
	as clearly, $u_{0k}\left(t\right)=u_{k}\left(t\right)-\mu_{k}\left(t\right)$
	are zero-mean processes.
	\[\therefore\frac{\partial\epsilon\left(\mathbf{N},M\right)}{\partial\mathbf{N}}\]
	\begin{eqnarray*}
		 = \left[\begin{array}{c}
		 	\frac{\partial\epsilon\left(\mathbf{N},M\right)}{\partial N_{1}}\\
		 	\frac{\partial\epsilon\left(\mathbf{N},M\right)}{\partial N_{2}}\\
		 	\vdots\\
		 	\frac{\partial\epsilon\left(\mathbf{N},M\right)}{\partial N_{n}}
		 \end{array}\right]&  = & -2E\left(\left[\begin{array}{c}
			u_{01}\left(t\right)f\left(\mathbf{u}\left(t\right)\right)\\
			u_{02}\left(t\right)f\left(\mathbf{u}\left(t\right)\right)\\
			\vdots\\
			u_{0n}\left(t\right)f\left(\mathbf{u}\left(t\right)\right)
		\end{array}\right]\right)\\
	&  & +2\textrm{cov}\left[\mathbf{u_0}\left(t\right)\right]\left[\begin{array}{c}
		N_{1}\\
		N_{2}\\
		\vdots\\
		N_{n}
	\end{array}\right]
	\end{eqnarray*}
	where 
	\[\textrm{cov}\left[\mathbf{u_0}\left(t\right)\right]=E\left[\mathbf{u_{0}}\left(t\right)\mathbf{u_{0}^{T}}\left(t\right)\right]\]
	{ \scriptsize
		\[
		=\left[\begin{array}{cccc}
		\sigma_{u_{01}}^{2} & E\left[u_{01}\left(t\right)u_{02}\left(t\right)\right] & \cdots & E\left[u_{01}\left(t\right)u_{0n}\left(t\right)\right]\\
		E\left[u_{02}\left(t\right)u_{01}\left(t\right)\right] & \sigma_{u_{02}}^{2} & \cdots & E\left[u_{02}\left(t\right)u_{0n}\left(t\right)\right]\\
		\vdots & \vdots & \ddots & \vdots\\
		E\left[u_{0n}\left(t\right)u_{01}\left(t\right)\right] & E\left[u_{0n}\left(t\right)u_{02}\left(t\right)\right] & \cdots & \sigma_{u_{0n}}^{2}
		\end{array}\right]
		\]
	}
is the covariance matrix of $\mathbf{u_{0}}\left(t\right)$.
    \[\therefore\frac{\partial\epsilon\left(\mathbf{N},M\right)}{\partial\mathbf{N}} = -2E\left[f\left(\mathbf{u}\left(t\right)\right)\mathbf{u_{0}}\left(t\right)\right]+2\textrm{cov}\left[\mathbf{u_{0}}\left(t\right)\right]\mathbf{N}\]

	The term $E\left[f\left(\mathbf{u_{0}}\left(t\right)\right)\mathbf{u_{0}}\left(t\right)\right]$
	can be expanded using the following result from \cite{Kazakov}:
		\[
		E\left[g\left(\boldsymbol{\eta}\right)\boldsymbol{\eta}\right]=E\left[\boldsymbol{\eta\eta^{T}}\right]E\left[\boldsymbol{\nabla}g\left(\boldsymbol{\eta}\right)\right]
		\] where $\boldsymbol{\eta}$ is any $n\times1$ jointly Gaussian vector and
	\[\boldsymbol{\nabla}=\left[\begin{array}{cccc}
	\frac{\partial}{\partial\eta_{1}} & \frac{\partial}{\partial\eta_{2}} & \cdots & \frac{\partial}{\partial\eta_{n}}\end{array}\right]^{\mathbf{T}}\] is the gradient operator.
Hence,
\[\therefore\frac{\partial\epsilon\left(\mathbf{N},M\right)}{\partial\mathbf{N}}\]
	\begin{eqnarray*}
		 & = & -2E\left[\mathbf{u_{0}}\left(t\right)\mathbf{u_{0}^{T}}\left(t\right)\right]E\left[\boldsymbol{\nabla}f\left(\mathbf{u}\left(t\right)\right)\right]+2\textrm{cov}\left[\mathbf{u_{0}}\left(t\right)\right]\mathbf{N}\\
		& = & -2\textrm{cov}\left[\mathbf{u_{0}}\left(t\right)\right]E\left[\boldsymbol{\nabla}f\left(\mathbf{u}\left(t\right)\right)\right]+2\textrm{cov}\left[\mathbf{u_{0}}\left(t\right)\right]\mathbf{N}\\
		& = & 2\textrm{cov}\left[\mathbf{u_{0}}\left(t\right)\right]\left\{ \mathbf{N}-E\left[\boldsymbol{\nabla}f\left(\mathbf{u}\left(t\right)\right)\right]\right\} 
	\end{eqnarray*}

$\epsilon\left(\mathbf{N},M\right)$ is minimized when $\frac{\partial\epsilon\left(\mathbf{N},M\right)}{\partial\mathbf{N}}=\mathbf{0}$,
i.e. when $\mathbf{N}=~E\left[\boldsymbol{\nabla}f\left(\mathbf{u}\left(t\right)\right)\right]$,
since $\textrm{cov}\left[\mathbf{u_{0}}\left(t\right)\right]$ is
positive definite.

The term $\frac{\partial\epsilon\left(\mathbf{N},M\right)}{\partial M}$ is calculated as follows.
{\small\begin{eqnarray*}
	\frac{\partial\epsilon\left(\mathbf{N},M\right)}{\partial M} & = & E\left\{ \frac{\partial}{\partial M}\left[f\left(\mathbf{u}\left(t\right)\right)-\mathbf{N^{T}}\mathbf{u_{0}}\left(t\right)-M\right]^{2}\right\} \\
	& = & E\left\{ 2\left[-f\left(\mathbf{u}\left(t\right)\right)+\mathbf{N^{T}u_{0}}\left(t\right)+M\right]\right\} \\
	& = & -2E\left[f\left(\mathbf{u}\left(t\right)\right)\right]+2E\left[\mathbf{N^{T}u_{0}}\left(t\right)\right]+2E\left[M\right]\\
	& = & -2E\left[f\left(\mathbf{u}\left(t\right)\right)\right]+2E\left[\sum_{i=1}^{n}N_{i}u_{0i}\left(t\right)\right]+2M\\
	& = & -2E\left[f\left(\mathbf{u}\left(t\right)\right)\right]+2\sum_{i=1}^{n}N_{i}E\left[u_{0i}\left(t\right)\right]+2M\\
	& = & -2E\left[f\left(\mathbf{u}\left(t\right)\right)\right]+2M\\
	& = & 2\left\{ M-E\left[f\left(\mathbf{u}\left(t\right)\right)\right]\right\} 
\end{eqnarray*}}

$\epsilon\left(\mathbf{N},M\right)$ is minimized when $\frac{\partial\epsilon\left(\mathbf{N},M\right)}{\partial M}=0$,
i.e. when $M=~E\left[f\left(\mathbf{u}\left(t\right)\right)\right]$.
This completes the proof.
	\end{proof}
    
Equations \eqref{neq} and \eqref{meqd} are similar to \eqref{neqs} and \eqref{meqs} respectively, with some differences. There are now $n$ quasilinear \textit{gains} forming a vector $\mathbf{N}$. Also, the single input $u(t)$ in \eqref{neqs} and \eqref{meqs} is replaced by a multiple-input vector $\mathbf{u}(t)$, and the derivative in \eqref{neqs} replaced by a gradient in \eqref{neq}. The process is illustrated in Fig. \ref{fig:p4}.

\begin{figure}
	\centering
	\includegraphics[width=1\linewidth]{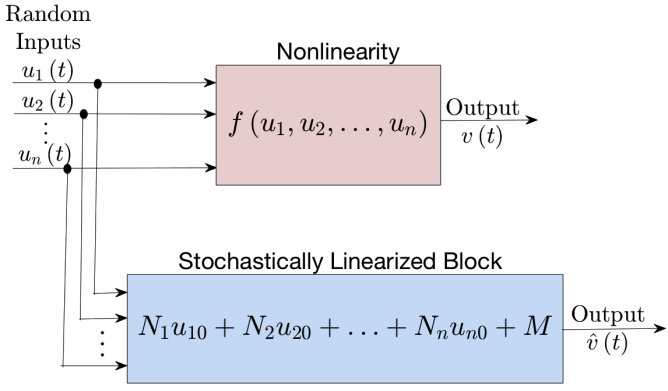}
	\caption{The process of Multi-variable Stochastic Linearization. Here $N_1,N_2,\dots,N_n$ are the quasilinear gains forming the vector $\mathbf{N}$ defined in \eqref{neq} and $M$ is the quasilinear bias defined in \eqref{meq}. $u_{01},u_{02},\dots,u_{0n}$ are the zero-mean parts of $u_1,u_2,\dots,u_n$ respectively.}
	\label{fig:p4}
\end{figure}

\section{Application to Bivariate Saturation}\label{abs}
In this section, the method of multivariate stochastic linearization is applied to the bivariate saturation nonlinearity, which is illustrated in Fig. \ref{bsn} and defined as follows:
\[\textrm{sat}_{\alpha,\beta}\left[u_{1}\left(t\right),u_{2}\left(t\right)\right]
\]
{\scriptsize
	\begin{equation}\label{bsat}
	=\begin{cases}
	\begin{cases}
	\beta+u_{2}\left(t\right), & u_{1}\left(t\right)>\beta+u_{2}\left(t\right)\\
	u_{1}\left(t\right), & \alpha-u_{2}\left(t\right)\leq u_{1}\left(t\right)\leq\beta+u_{2}\left(t\right)\\
	\alpha-u_{2}\left(t\right), & u_{1}\left(t\right)<\alpha-u_{2}\left(t\right)
	\end{cases} & ,u_{2}\left(t\right)\geq\textrm{max}\left(-\beta,\alpha\right)\\
	0 & ,u_{2}\left(t\right)<\textrm{max}\left(-\beta,\alpha\right)
	\end{cases}
	\end{equation}
}

We call $u_1(t)$ the primary input and $u_2(t)$ the secondary input. On substituting the nonlinear function \eqref{bsat} for $f(\cdot)$ in \eqref{neq} and \eqref{meqd}, the values of $N_1$ and $N_2$ can be found out to be:

\begin{figure}[b]
	\centering
	{\footnotesize{}\begin{tikzpicture}[every node/.style={transform shape}, >=latex]
		\draw [->,thick](-2.5,0)--(2,0) node[anchor=west] {$u_1\left(t\right)$};
		\draw [->,thick](0,-2)--(0,1.5) node[anchor=south] {$\textrm{sat}_{\alpha}\left[u_1\left(t\right),u_2\left(t\right)\right]$};
		\draw[thick] (-2pt,1) node[anchor=east]{$\beta$}--(2pt,1);
		\draw[thick] (-2pt,-1.5) node[anchor=east]{$\alpha$}--(2pt,-1.5);
		\draw[thick] (-1.5,-2pt) node[anchor=south]{$\alpha-u_2\left(t\right)$}--(-1.5,2pt);
		\draw[thick] (1,2pt) node[anchor=north]{$\beta+u_2\left(t\right)$}--(1,-2pt);
		\draw [color=gray](-2.5,-1.5)--(-1.5,-1.5);
		\draw (-1.5,-1.5)--(1,1);
		\draw [color=gray](1,1)--(2,1);
		\draw[color=blue, samples=40] plot[domain=1:2] (\x,1+rand*0.1) node[anchor=west] {$u_2 \left(t\right)$};
		\draw[color=blue, samples=40] plot[domain=-2.5:-1.5] (\x,-1.5-rand*0.1) (-2.5,-1.5)node[anchor=east] {$-u_2 \left(t\right)$};
		\end{tikzpicture}}{\footnotesize \par}
	{\footnotesize{}\caption{Bivariate Saturation Nonlinearity}\label{bsn}
	}{\footnotesize \par}
\end{figure}
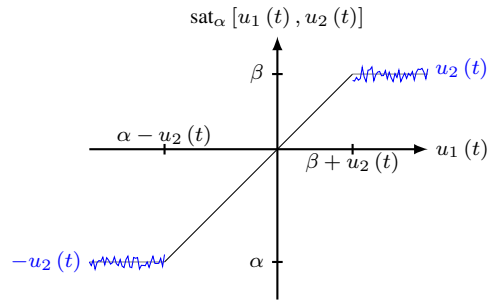

\begin{gather*}
\mathbf{N}=\left[\begin{array}{cc}
N_{1} & N_{2}\end{array}\right]^{T}	=	E\left\{ \boldsymbol{\nabla}\textrm{sat}_{\alpha,\beta}\left[u_{1}\left(t\right),u_{2}\left(t\right)\right]\right\} \\
=	E\left\{ \left[\begin{array}{c}
\frac{\partial}{\partial u_{1}}\textrm{sat}_{\alpha,\beta}\left[u_{1}\left(t\right),u_{2}\left(t\right)\right]\\
\frac{\partial}{\partial u_{2}}\textrm{sat}_{\alpha,\beta}\left[u_{1}\left(t\right),u_{2}\left(t\right)\right]
\end{array}\right]\right\} \\
{\tiny =\begin{cases}
E\left[\begin{array}{c}
\begin{cases}
0, & u_{1}\left(t\right)>\beta+u_{2}\left(t\right)\\
1, & \alpha-u_{2}\left(t\right)\leq u_{1}\left(t\right)\leq\beta+u_{2}\left(t\right)\\
0, & u_{1}\left(t\right)<\alpha-u_{2}\left(t\right)
\end{cases}\\
\begin{cases}
1, & u_{1}\left(t\right)>\beta+u_{2}\left(t\right)\\
0, & \alpha-u_{2}\left(t\right)\leq u_{1}\left(t\right)\leq\beta+u_{2}\left(t\right)\\
-1 & u_{1}\left(t\right)<\alpha-u_{2}\left(t\right)
\end{cases}
\end{array}\right] & ,u_{2}\left(t\right)\geq\textrm{max}\left(-\beta,\alpha\right)\\
0 & ,u_{2}\left(t\right)<\textrm{max}\left(-\beta,\alpha\right)
\end{cases}}
\end{gather*}
or re-written as in \eqref{n1e1} and \eqref{n2e1}.

\begin{figure*}[t]
	\centering
	\begin{tikzpicture}[>=latex,scale=0.9,every node/.style={transform shape}, every text node part/.style={align=center}]
	\node (wr) at (0,0){$w_r$};
	\node[rectangle,draw] (rf) [below=0.5 of wr,label={left:$F_{\Omega_r}\left(s\right)$}]{$\dot{x}_r=A_{rf}x_r+B_{rf}w_r$\\$r_f=C_{rf}x_r+D_{rf}w_r$};
	\node[rectangle,draw] (sr) [below=0.5 of rf]{$\sigma_r$};
	\node[circle,draw] (cr) [below=0.5 of sr]{+};
	\node (mur) [left=0.5 of cr]{$\mu_r$};
	\node[circle,draw] (sum) [below right=0.8 of cr]{};
	\node[rectangle,draw] (C) [right=0.8 of sum,label={below:$C\left(s\right)$}]{$\dot{x}_C=A_{C}x_C+B_{C}e$\\$u=C_{C}x_C+D_Ce$};
	\node[rectangle,draw] (sat) [right= of C]{$\textrm{sat}_{\alpha}^{\beta}\left(u,{\beta}_n\right)$};
	\node[circle,draw] (dsum) [right=3 of sat]{+};
	
	\node[circle,draw] (cb) [above=0.5 of sat]{+};
	\node (mub) [left=0.5 of cb]{$\mu_{\beta}$};
	\node[rectangle,draw] (sb) [above=0.5 of cb]{$\sigma_{\beta}$};
	\node[rectangle,draw] (bf) [above=0.5 of sb,label={left:$F_{\Omega_{\beta}}\left(s\right)$}]{$\dot{x}_{\beta}=A_{{\beta}f}x_{\beta}+B_{{\beta}f}w_{\beta}$\\${\beta}_f=C_{\beta f}x_{\beta}+D_{\beta f}w_{\beta}$};
	\node (wb) [above=0.5 of bf]{$w_{\beta}$};
	
	\node[circle,draw] (cd) [above=0.5 of dsum]{+};
	\node (mud) [right=0.5 of cd]{$\mu_{d}$};
	\node[rectangle,draw] (sd) [above=0.5 of cd]{$\sigma_{d}$};
	\node[rectangle,draw] (df) [above=0.5 of sd,label={right:$F_{\Omega_d}\left(s\right)$}]{$\dot{x}_{d}=A_{df}x_{d}+B_{df}w_{d}$\\${d}_f=C_{df}x_{d}+D_{df}w_{d}$};
	\node (wd) [above=0.5 of df]{$w_{d}$};
	
	\node[rectangle,draw] (P) [right= of dsum,label={below:$P\left(s\right)$}]{$\dot{x}_P=A_{P}x_P+B_{P}z$\\$y=C_{P}x_P+D_Pz$};
	\node[coordinate] (y) [right= of P]{};
	
	\draw[->] (P)--node[above]{$y\left(t\right)$}(y)--($(y)+(0,-2)$)-|node[pos=0.95,left]{-}(sum);
	\draw[->] (wr)--(rf);
	\draw[->] (rf)--(sr);
	\draw[->] (sr)--(cr);
	\draw[->] (mur)--(cr);
	\draw[->] (cr)|-node[pos=0.4,below]{$r\left(t\right)$}node[very near end,below]{+}(sum);
	\draw[->] (wb)--(bf);
	\draw[->] (bf)--(sb);
	\draw[->] (sb)--(cb);
	\draw[->] (mub)--(cb);
	\draw[->] (cb)--node[right]{$\beta_n\left(t\right)$}(sat);
	\draw[->] (wd)--(df);
	\draw[->] (df)--(sd);
	\draw[->] (sd)--(cd);
	\draw[->] (mud)--(cd);
	\draw[->] (cd)--node[left]{$d\left(t\right)$}(dsum);
	\draw[->] (sum)--node[above]{$e\left(t\right)$}(C);
	\draw[->] (C)--node[above]{$u\left(t\right)$}(sat);
	\draw[->] (sat)--node[above]{$v\left(t\right)$}(dsum);
	\draw[->] (dsum)--node[above]{$z\left(t\right)$}(P);
	\end{tikzpicture}
	\caption{Block Diagram of Control System\label{fig:Block-Diagram-of}}
\end{figure*}

\begin{equation}\label{n1e1}
	N_{1}=\int_{\max\left(-\beta,\alpha\right)}^{\infty}\int_{\alpha-u_{2}}^{\beta+u_{2}}\left(1\right)\mathcal{N}\left(\mu_{1},\mu_{2},\sigma_{1},\sigma_{2},\rho\right)du_{1}du_{2}
	\end{equation}
	
	\begin{equation}\label{n2e1}
	\begin{gathered}
	N_{2}=\int_{\max\left(-\beta,\alpha\right)}^{\infty}\int_{-\infty}^{\alpha-u_{2}}\left(-1\right)\mathcal{N}\left(\mu_{1},\mu_{2},\sigma_{1},\sigma_{2},\rho\right)du_{1}du_{2}\\
	+\int_{\max\left(-\beta,\alpha\right)}^{\infty}\int_{\beta+u_{2}}^{\infty}\left(1\right)\mathcal{N}\left(\mu_{1},\mu_{2},\sigma_{1},\sigma_{2},\rho\right)du_{1}du_{2}
	\end{gathered}
	\end{equation}
	where $\mu_1$ and $\mu_2$ are the means of the inputs $u_1$ and $u_2$ respectively, $\sigma_1$ and $\sigma_2$ being their corresponding standard deviations, $\rho$ is the correlation between $u_1$ and $u_2$, and:
{\begin{equation}
\begin{gathered}\label{npdf}
\mathcal{N}\left(\mu_{1},\mu_{2},\sigma_{1},\sigma_{2},\rho\right)=\frac{1}{2\pi\sigma_{1}\sigma_{2}\sqrt{1-\rho^{2}}}e^{-\frac{{u}_{1}^{*2}+{u}_{2}^{*2}-2\rho u_{1}^{*}u_{2}^{*}}{2\left(1-\rho^{2}\right)}}
	\end{gathered}
\end{equation}}
is the bivariate Gaussian PDF in which
\[u_{1}^*=\frac{u_{1}-\mu_{1}}{\sigma_{1}}\]
\[u_{2}^*=\frac{u_{2}-\mu_{2}}{\sigma_{2}}\]
	
	The value of $M$ can be found from \eqref{meqd} and can be written as:
	\begin{equation}\label{me1}
	\small
	\begin{gathered}
	M=\int_{\max\left(-\beta,\alpha\right)}^{\infty}\int_{-\infty}^{\alpha-u_{2}}\left(\alpha-u_{2}\right)\mathcal{N}\left(\mu_{1},\mu_{2},\sigma_{1},\sigma_{2},\rho\right)du_{1}du_{2}\\
	+\int_{\max\left(-\beta,\alpha\right)}^{\infty}\int_{\alpha-u_{2}}^{\beta+u_{2}}u_{1}\mathcal{N}\left(\mu_{1},\mu_{2},\sigma_{1},\sigma_{2},\rho\right)du_{1}du_{2}\\
	+\int_{\max\left(-\beta,\alpha\right)}^{\infty}\int_{\beta+u_{2}}^{\infty}\left(\beta+u_{2}\right)\mathcal{N}\left(\mu_{1},\mu_{2},\sigma_{1},\sigma_{2},\rho\right)du_{1}du_{2}
	\end{gathered}
	\end{equation}

The integrals in the RHS of \eqref{n1e1}, \eqref{n2e1} and \eqref{me1} do not have closed form expressions, but can be computed numerically, for example, using vectorized adaptive quadrature, which is used by \texttt{integral2} in MATLAB \cite{shampine}. It has been practically observed that numerical computation is much slower using the integrals as they are. They are much faster to compute, by more than 2 orders of magnitude, if they are transformed as follows and resulting the inner integral reduced to a closed form expression. Applying the transformations:

\[u'_{1}=\frac{\frac{u_{1}-\mu_{1}}{\sigma_{1}}-\rho\frac{u_{2}-\mu_{2}}{\sigma_{2}}}{\sqrt{1-\rho^{2}}}\] and 
\[u'_{2}=\frac{u_{2}-\mu_{2}}{\sigma_{2}}\]
we get:
\[N_1=\int_{u'_{2min}}^{\infty}\int_{u'_{1min}}^{u'_{1max}}\left(1\right)\frac{1}{2\pi}e^{-\left(u_{1}^{'2}+u{}_{2}^{'2}\right)}d'u_{1}d'u_{2}\]
where:
\begin{equation}\label{up1min}
u'_{1\textrm{min}} = \frac{\alpha-\left(\mu_{1}+\mu_{2}\right)+u'_{2}\left(\rho\sigma_{1}+\sigma_{2}\right)}{\sigma_{1}\sqrt{1-\rho^{2}}}
\end{equation}
\begin{equation}\label{up1max}
u'_{1\textrm{max}}=\frac{\beta-\left(\mu_{1}-\mu_{2}\right)+u'_{2}\left(\sigma_{2}-\rho\sigma_{1}\right)}{\sigma_{1}\sqrt{1-\rho^{2}}}
\end{equation}
\begin{equation}\label{u2pmin}
u'_{2\textrm{min}}=\frac{\max\left(\alpha,-\beta\right)-\mu_{2}}{\sigma_{2}}
\end{equation}

On simplifying the inner integral, we get:
\begin{equation}\label{n1e2}
N_1=\int_{u'_{2\textrm{min}}}^{\infty}\frac{\sqrt{2}e^{-\frac{{u'_{2}}^{2}}{2}}}{4\sqrt{\pi}}\left[\mathrm{erf}\left(\gamma_1\right)+\mathrm{erf}\left(\gamma_2\right)\right]du'_{2}
\end{equation}
where:
\begin{equation}\label{g1}
\gamma_{1}=\frac{\sqrt{2}\left(\mu_{1}-\alpha+\mu_{2}+\sigma_{2}u'_{2}+\rho\sigma_{1}u'_{2}\right)}{2\sigma_{1}\sqrt{1-\rho^{2}}}
\end{equation}
\begin{equation}\label{g2}
\gamma_{2}=\frac{\sqrt{2}\,\left(\mathrm{\beta}-\mu_{1}+\mu_{2}+\sigma_{2}\,u'_{2}-\rho\,\sigma_{1}\,u'_{2}\right)}{2\sigma_{1}\sqrt{1-\rho^{2}}}
\end{equation}

Similarly,
\begin{equation*}
\begin{gathered}
N_{2}=\int_{u'_{2\textrm{min}}}^{\infty}\int_{-\infty}^{u'_{1\textrm{min}}}\left(-1\right)\frac{1}{2\pi}e^{-\left(u_{1}^{'2}+u{}_{2}^{'2}\right)}du'_{1}du'_{2}
\\+\int_{u'_{2\textrm{min}}}^{\infty}\int_{u'_{1\textrm{max}}}^{\infty}\left(1\right)\frac{1}{2\pi}e^{-\left(u_{1}^{'2}+u{}_{2}^{'2}\right)}du'_{1}du'_{2}
\end{gathered}
\end{equation*}
On simplifying,
\begin{equation}\label{n2e2}
N_{2}=\int_{u'_{2\textrm{min}}}^{\infty}\frac{\sqrt{2}e^{-\frac{u_{2}^{2}}{2}}}{4\,\sqrt{\pi}}\left[\mathrm{erf}\left(\gamma_{1}\right)-\mathrm{erf}\left(\gamma_{2}\right)\right]du'_{2}
\end{equation}

Finally,
\begin{equation*}
    \begin{gathered}
	M=\int_{u'_{2\textrm{min}}}^{\infty}\left[\int_{-\infty}^{u'_{1\textrm{min}}}\left(\alpha-\mu_{2}-u'_{2}\sigma_{2}\right)\frac{1}{2\pi}e^{-\left(u_{1}^{'2}+u{}_{2}^{'2}\right)}du'_{1}\right.\\+\int_{u'_{1\textrm{min}}}^{u'_{1\textrm{max}}}\left(\mu_{1}+\sigma_{1}\sqrt{1-\rho^{2}}u'_{1}+\rho\sigma_{1}u'_{2}\right)\frac{1}{2\pi}e^{-\left(u_{1}^{'2}+u{}_{2}^{'2}\right)}du'_{1}\\\left.+\int_{u'_{1\textrm{max}}}^{\infty}\left(\beta+\mu_{2}+u'_{2}\sigma_{2}\right)\frac{1}{2\pi}e^{-\left(u_{1}^{'2}+u{}_{2}^{'2}\right)}du'_{1}\right]du'_{2}
    \end{gathered}
\end{equation*}
On simplifying,
\begin{equation}\label{me2}
\begin{gathered}
M=\int_{u'_{2 \textrm{min}}}^{\infty}\frac{\sqrt{2}\,{\mathrm{e}}^{-\frac{{u'_{2}}^{2}}{2}}\,\left(\mathrm{erf}\left(\gamma_{1}\right)-1\right)\,\left(\mu_{2}-\alpha+\sigma_{2}\,u'_{2}\right)}{4\,\sqrt{\pi}}du'_{2}\\
+\int_{u'_{2 \textrm{min}}}^{\infty}\frac{\sigma_{1}\sqrt{1-\rho^{2}}}{2\pi}e^{-\frac{{u'}_{2}^{2}}{2}}\left(e^{-\gamma_{1}^{2}}-e^{-\gamma_{2}^{2}}\right)du'_{2}\\
+\int_{u'_{2\textrm{min}}}^{\infty}\frac{\sqrt{2}}{4\sqrt{\pi}}\left(\mu_{1}+\rho\sigma_{1}u'_{2}\right){\mathrm{e}}^{-\frac{{u'_{2}}^{2}}{2}}\left[\mathrm{erf}\left(\gamma_{1}\right)+\mathrm{erf}\left(\gamma_{2}\right)\right]du'_{2}\\
-\int_{u'_{2 \textrm{min}}}^{\infty}\frac{\sqrt{2}\,{\mathrm{e}}^{-\frac{{u'_{2}}^{2}}{2}}\,\left(\mathrm{erf}\left(\gamma_{2}\right)-1\right)\,\left(\beta+\mu_{2}+\sigma_{2}\,u'_{2}\right)}{4\,\sqrt{\pi}}du'_{2}
\end{gathered}
\end{equation}

\section{Quasilinear Control of Closed Loop Control System}\label{qlccl}
\subsection{Description of the Closed Loop System}
Consider the control system shown in Fig. \ref{fig:Block-Diagram-of}. Similar to the system in Fig. \ref{figsv}, it has a plant $P\left(s\right)$ and a controller $C\left(s\right)$. However, this time, the actuator is a bivariate saturation $\textrm{sat}_{\alpha,\beta}\left[u_1\left(t\right),u_2\left(t\right)\right]$, with random bounds modeled as a second input $u_2\left(t\right)$. Similar to the generation of the reference $r\left(t\right)$ and the disturbance $d\left(t\right)$ described in subsection \ref{clssv}, the second actuator input $u_2(t)$ is generated by passing white noise $w_{\beta}(t)$ through a coloring filter $F_{\Omega_{\beta}}(s)$ with $H_2$-norm equal to 1, scaling it by $\sigma_2$, and adding a bias $\mu_2$. This ensures that he reference $r\left(t\right)$, the disturbance $d\left(t\right)$ and the second actuator input $u_2(t)$ are WSS Gaussian random processes with means $\mu_{r}$, $\mu_{d}$, $\mu_2$, and standard deviations $\sigma_{r},\sigma_{d}$ and $\sigma_2$ respectively. The coloring filters band-limit the white noises $w_r$, $w_d$ and $w_{\beta}$ to a desired bandwidth, which is desirable to be close to the system bandwidth.  The block diagram shows the corresponding state-space representations.

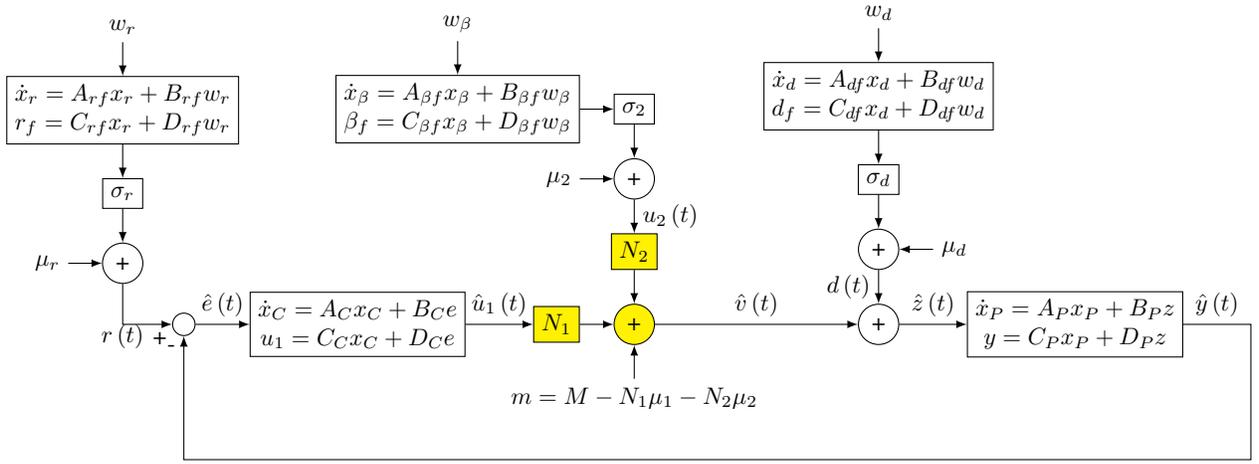
\begin{figure*}[t]
	\centering
	\begin{tikzpicture}[>=latex,scale=0.9,every node/.style={transform shape}, every text node part/.style={align=center}]
	\node (wr) at (0,0){$w_r$};
	\node[rectangle,draw] (rf) [below=0.5 of wr]{$\dot{x}_r=A_{rf}x_r+B_{rf}w_r$\\$r_f=C_{rf}x_r+D_{rf}w_r$};
	\node[rectangle,draw] (sr) [below=0.5 of rf]{$\sigma_r$};
	\node[circle,draw] (cr) [below=0.5 of sr]{+};
	\node (mur) [left=0.5 of cr]{$\mu_r$};
	\node[circle,draw] (sum) [below right=0.8 of cr]{};
	\node[rectangle,draw] (C) [right=0.8 of sum]{$\dot{x}_C=A_{C}x_C+B_{C}e$\\$u_1=C_{C}x_C+D_Ce$};
	\node[rectangle,draw,fill=yellow] (N1) [right= of C]{$N_1$};
	\node[circle,draw,fill=yellow] (satc)[right=0.5 of N1]{+};
	\node[circle,draw] (dsum) [right=3 of satc]{+};
	
	\node[rectangle,draw,fill=yellow] (N2) [above=0.5 of satc]{$N_2$};
	\node (m) [below=0.5 of satc]{$m=M-N_1\mu_1-N_2\mu_{2}$};
	\node[circle,draw] (cb) [above=0.5 of N2]{+};
	\node (mub) [left=0.5 of cb]{$\mu_{2}$};
	\node[rectangle,draw] (sb) [above=0.5 of cb]{$\sigma_{2}$};
	\node[rectangle,draw] (bf) [left=0.5 of sb]{$\dot{x}_{\beta}=A_{{\beta}f}x_{\beta}+B_{{\beta}f}w_{\beta}$\\${\beta}_f=C_{\beta f}x_{\beta}+D_{\beta f}w_{\beta}$};
	\node (wb) [above=0.5 of bf]{$w_{\beta}$};
	
	\node[circle,draw] (cd) [above=0.5 of dsum]{+};
	\node (mud) [right=0.5 of cd]{$\mu_{d}$};
	\node[rectangle,draw] (sd) [above=0.5 of cd]{$\sigma_{d}$};
	\node[rectangle,draw] (df) [above=0.5 of sd]{$\dot{x}_{d}=A_{df}x_{d}+B_{df}w_{d}$\\${d}_f=C_{df}x_{d}+D_{df}w_{d}$};
	\node (wd) [above=0.5 of df]{$w_{d}$};
	
	\node[rectangle,draw] (P) [right= of dsum]{$\dot{x}_P=A_{P}x_P+B_{P}z$\\$y=C_{P}x_P+D_Pz$};
	\node[coordinate] (y) [right= of P]{};
	
	\draw[->] (P)--node[above]{$\hat{y}\left(t\right)$}(y)--($(y)+(0,-2)$)-|node[pos=0.95,left]{-}(sum);
	\draw[->] (wr)--(rf);
	\draw[->] (rf)--(sr);
	\draw[->] (sr)--(cr);
	\draw[->] (mur)--(cr);
	\draw[->] (cr)|-node[pos=0.4,below]{$r\left(t\right)$}node[very near end,below]{+}(sum);
	\draw[->] (wb)--(bf);
	\draw[->] (bf)--(sb);
	\draw[->] (sb)--(cb);
	\draw[->] (mub)--(cb);
	\draw[->] (cb)--node[right]{$u_2\left(t\right)$}(N2);
	\draw[->] (wd)--(df);
	\draw[->] (df)--(sd);
	\draw[->] (sd)--(cd);
	\draw[->] (mud)--(cd);
	\draw[->] (cd)--node[left]{$d\left(t\right)$}(dsum);
	\draw[->] (sum)--node[above]{$\hat{e}\left(t\right)$}(C);
	\draw[->] (C)--node[above]{$\hat{u}_1\left(t\right)$}(N1);
	\draw[->] (N1)--(satc);
	\draw[->] (N2)--(satc);
	\draw[->] (m)--(satc);
	\draw[->] (satc)--node[above]{$\hat{v}\left(t\right)$}(dsum);
	\draw[->] (dsum)--node[above]{$\hat{z}\left(t\right)$}(P);
	\end{tikzpicture}
	\caption{Stochastically linearized version of Fig. \ref{fig:Block-Diagram-of}}
	\label{sl}
\end{figure*}

Applying stochastic linearization to this system, we get the system of Fig. \ref{sl}. To find the values of the quasilinear gains $N_1$, $N_2$ and the quasilinear bias $M$, it is required to compute the means and standard deviations of $u_1(t)$ and $u_2(t)$. This process can be simplified by separating the input signals into their zero-mean random parts and a constant mean part, considering two different systems and adding the results, as investigated in the following subsection.

\subsection{Decomposition into two sub-systems}

Consider the following stochastic state space equation as a general representation of the system in Fig.~\ref{sl}:
\begin{equation}\label{sde}
d\mathbf{x}=\mathbf{Ax}dt+\sum_{i=1}^{n}\mathbf{b_{i}}dw_{i}
\end{equation}
where $\mathbf{x}=\left[\begin{array}{cccc}
x_{1} & x_{2} & \ldots & x_{n}\end{array}\right]^T$ are the states of the system and $w_i$ are the stochastic inputs. It is known that, provided the system is asymptotically stable, the covariance matrix associated with $\mathbf{x}$, $\mathbf{\Sigma}$, is the solution of the following Lyapunov equation \cite{Brockett}:
\begin{equation}\label{lyapeq}
\mathbf{A}\mathbf{\Sigma}+\mathbf{\Sigma}\mathbf{A^T}+\sum_{i=1}^{n}\mathbf{b_{i}}\mathbf{b_{i}}^{\mathbf{T}}=\mathbf{0}
\end{equation}
where:
{\footnotesize\begin{align}\label{sigma}
\Sigma & =\left[\begin{array}{cccc}
E\left[x_{1}^{2}\left(t\right)\right] & E\left[x_{1}\left(t\right)x_{2}\left(t\right)\right] & \cdots & E\left[x_{1}\left(t\right)x_{n}\left(t\right)\right]\\
E\left[x_{2}\left(t\right)x_{1}\left(t\right)\right] & E\left[x_{2}^{2}\left(t\right)\right] & \ddots & E\left[x_{2}\left(t\right)x_{n}\left(t\right)\right]\\
\vdots & \ddots & \ddots & \vdots\\
E\left[x_{n}\left(t\right)x_{1}\left(t\right)\right] & E\left[x_{n}\left(t\right)x_{2}\left(t\right)\right] & \cdots & E\left[x_{n}^{2}\left(t\right)\right]
\end{array}\right]
\end{align}}is the covariance matrix. For the following Theorem, we denote a system described by \eqref{sde} by $S_{\mathbf{\Sigma}}$ where $\mathbf{\Sigma}$ is its corresponding covariance matrix from \eqref{lyapeq}.

\begin{theorem}
Consider a system $S_{\mathbf{\Sigma}}$, having a covariance matrix $\mathbf{\Sigma}$. Also consider two other systems, $S_{\mathbf{\Sigma}_1}$ and $S_{\mathbf{\Sigma}_2}$, having covariance matrices $\mathbf{\Sigma}_1$ and $\mathbf{\Sigma}_2$ respectively. One of them, $S_{\mathbf{\Sigma}_1}$, is driven by the zero-mean parts of the inputs to system $S_{\mathbf{\Sigma}}$. The other system, $S_{\mathbf{\Sigma}_2}$, is driven by constant inputs having values equal to the means of the inputs driving $S_{\mathbf{\Sigma}}$. Then $\mathbf{\Sigma}=\mathbf{\Sigma}_1+\mathbf{\Sigma}_2$.
\end{theorem}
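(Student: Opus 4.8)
The plan is to avoid manipulating the two Lyapunov equations directly and instead let superposition do the work. Write every input driving $S_{\mathbf{\Sigma}}$ as the sum of its constant mean and its zero-mean fluctuation. Since the map from inputs to the state $\mathbf{x}(t)$ in \eqref{sde} is linear, in the stationary regime the state splits as $\mathbf{x}(t)=\mathbf{x}^{(1)}(t)+\mathbf{x}^{(2)}(t)$, where $\mathbf{x}^{(1)}(t)$ is the state of $S_{\mathbf{\Sigma}_1}$ (the same system forced only by the zero-mean parts) and $\mathbf{x}^{(2)}(t)$ is the state of $S_{\mathbf{\Sigma}_2}$ (the same system forced only by the constant means). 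I would begin by stating this decomposition explicitly, noting that the asymptotic stability of $\mathbf{A}$ assumed for \eqref{lyapeq} guarantees that all three systems have well-defined stationary regimes, and also recording that the matrix $\mathbf{\Sigma}$ of \eqref{sigma} is the matrix of \emph{second moments} $E[\mathbf{x}(t)\mathbf{x}^{T}(t)]$.

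Next I would identify the two constituents. The system $S_{\mathbf{\Sigma}_1}$ is forced only by zero-mean inputs, so its mean state obeys $\frac{d}{dt}E[\mathbf{x}^{(1)}(t)]=\mathbf{A}E[\mathbf{x}^{(1)}(t)]$ and therefore $E[\mathbf{x}^{(1)}(t)]=\mathbf{0}$ in steady state; hence $\mathbf{\Sigma}_1=E\bigl[\mathbf{x}^{(1)}(t)(\mathbf{x}^{(1)}(t))^{T}\bigr]$, which is precisely the solution of \eqref{lyapeq} for that system. The system $S_{\mathbf{\Sigma}_2}$ is forced only by constant inputs, so its stationary state is a deterministic constant vector $\bar{\mathbf{x}}$ (the DC response), and $\mathbf{\Sigma}_2=E\bigl[\mathbf{x}^{(2)}(t)(\mathbf{x}^{(2)}(t))^{T}\bigr]=\bar{\mathbf{x}}\bar{\mathbf{x}}^{T}$.

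The last step is the short expansion
\begin{align*}
\mathbf{\Sigma}&=E[\mathbf{x}\mathbf{x}^{T}]=E\bigl[(\mathbf{x}^{(1)}+\bar{\mathbf{x}})(\mathbf{x}^{(1)}+\bar{\mathbf{x}})^{T}\bigr]\\
&=E\bigl[\mathbf{x}^{(1)}(\mathbf{x}^{(1)})^{T}\bigr]+E[\mathbf{x}^{(1)}]\bar{\mathbf{x}}^{T}+\bar{\mathbf{x}}\,E[\mathbf{x}^{(1)}]^{T}+\bar{\mathbf{x}}\bar{\mathbf{x}}^{T},
\end{align*}
in which $\bar{\mathbf{x}}$ has been pulled out of the expectations because it is deterministic. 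Since $E[\mathbf{x}^{(1)}]=\mathbf{0}$, the two middle terms vanish and $\mathbf{\Sigma}=\mathbf{\Sigma}_1+\mathbf{\Sigma}_2$, which is the claim.

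I expect the only real difficulty to be bookkeeping rather than mathematics: one must be careful to interpret $\mathbf{\Sigma}$ as a second-moment matrix, to invoke superposition for the linear system correctly (same $\mathbf{A}$ and same input channels, with the inputs adding), and above all to justify that $\mathbf{x}^{(1)}$ is genuinely zero-mean and $\mathbf{x}^{(2)}$ genuinely deterministic in the stationary regime — these are exactly the facts that annihilate the cross terms. Everything past that is a two-line computation.
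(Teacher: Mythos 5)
Your proof is correct and rests on essentially the same identity as the paper's: the second-moment matrix $E[\mathbf{x}\mathbf{x}^{T}]$ splits into the covariance part (which is $\mathbf{\Sigma}_1$, since the zero-mean-input system has zero-mean states) plus the outer product of the means (which is $\mathbf{\Sigma}_2$, since the constant-input system has deterministic states). Your explicit appeal to superposition of the linear dynamics to write $\mathbf{x}=\mathbf{x}^{(1)}+\bar{\mathbf{x}}$ actually makes rigorous a step the paper's entrywise argument leaves implicit, namely that the covariances and means appearing in the decomposition of $\mathbf{\Sigma}$ are indeed those produced by $S_{\mathbf{\Sigma}_1}$ and $S_{\mathbf{\Sigma}_2}$.
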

\begin{proof}
The correlation coefficient between any two states is defined by:
\[
\rho_{ij}=\frac{\textrm{cov}\left(x_{i},x_{j}\right)}{\sigma_{i}\sigma_{j}}=\frac{E\left[x_{i}x_{j}\right]-\mu_{i}\mu_{j}}{\sigma_{i}\sigma_{j}}
\]
where $\mu_i=E\left[x_i\right]$ and $\sigma_{i}=\sqrt{E\left(x_{i}^{2}\right)-\left\{ E\left(x_{i}\right)\right\} ^{2}}$. Then,
\[
E\left[x_{i}x_{j}\right]=\rho_{ij}\sigma_{i}\sigma_{j}+\mu_{i}\mu_{j}
\]
and hence from \eqref{sigma}:
{\scriptsize 
\[
\mathbf{\Sigma} =\left[\begin{array}{cccc}
\sigma_{1}^{2}+\mu_{1}^{2} & \rho_{12}\sigma_{1}\sigma_{2}+\mu_{1}\mu_{2} & \cdots & \rho_{1n}\sigma_{1}\sigma_{n}+\mu_{1}\mu_{n}\\
\rho_{21}\sigma_{2}\sigma_{1}+\mu_{2}\mu_{1} & \sigma_{2}^{2}+\mu_{2}^{2} & \ddots & \rho_{2n}\sigma_{2}\sigma_{n}+\mu_{2}\mu_{n}\\
\vdots & \ddots & \ddots & \vdots\\
\rho_{n1}\sigma_{n}\sigma_{1}+\mu_{n}\mu_{1} & \rho_{n2}\sigma_{n}\sigma_{2}+\mu_{n}\mu_{2} & \cdots & \sigma_{n}^{2}+\mu_{n}^{2}
\end{array}\right]\]}
\[
=\left[\begin{array}{cccc}
\sigma_{1}^{2} & \rho_{12}\sigma_{1}\sigma_{2} & \cdots & \rho_{1n}\sigma_{1}\sigma_{n}\\
\rho_{21}\sigma_{2}\sigma_{1} & \sigma_{2}^{2} & \ddots & \rho_{2n}\sigma_{2}\sigma_{n}\\
\vdots & \ddots & \ddots & \vdots\\
\rho_{n1}\sigma_{n}\sigma_{1} & \rho_{n2}\sigma_{n}\sigma_{2} & \cdots & \sigma_{n}^{2}
\end{array}\right]\]
\begin{equation}\label{sigma1}
+\left[\begin{array}{cccc}
\mu_{1}^{2} & \mu_{1}\mu_{2} & \cdots & \mu_{1}\mu_{n}\\
\mu_{2}\mu_{1} & \mu_{2}^{2} & \ddots & \mu_{2}\mu_{n}\\
\vdots & \ddots & \ddots & \vdots\\
\mu_{n}\mu_{1} & \mu_{n}\mu_{2} & \cdots & \mu_{n}^{2}
\end{array}\right]
\end{equation}
When the inputs are zero-mean, so are all the states, and hence, $\mu_i=0$ for $i=1,2,...,n$. Then $\mathbf{\Sigma}=\mathbf{\Sigma}_1$. When the inputs are constant and are equal to the means of the inputs to $S_{\mathbf{\Sigma}}$, there is also no variability in the states, and hence, $\sigma_i=0$ for $i=1,2,...,n$ and $\mathbf{\Sigma}=\mathbf{\Sigma}_2$. Therefore, in general, from \eqref{sigma1},
\[\mathbf{\Sigma}=\mathbf{\Sigma}_1+\mathbf{\Sigma}_2\]
\end{proof}
Hence, for the purpose of analysis, the system can be decomposed into two systems - one having all inputs as zero-mean (this is useful for computing the standard deviations of required signals) and another one - with constant inputs representing the averages of the inputs (this is useful for computing the expected values of the required signals). 

\subsection{Calculation of mean and standard deviation of actuator inputs}

\begin{figure*}[t]
	\begin{equation}\label{e1}
	\mathbf{A}=\left(\begin{array}{ccccc}
	A_{rf} & 0 & 0 & 0 & 0\\
	0 & A_{\beta f} & 0 & 0 & 0\\
	0 & 0 & A_{df} & 0 & 0\\
	\frac{B_{c}\,C_{rf}\,\sigma_{r}}{D_{c}D_{p}N_{1}+1} & -\frac{B_{c}\,C_{\beta f}\,D_{p}\,N_{2}\,\sigma_{\beta}}{D_{c}D_{p}N_{1}+1} & -\frac{B_{c}\,C_{df}\,D_{p}\,\sigma_{d}}{D_{c}D_{p}N_{1}+1} & \frac{A_{c}+A_{c}\,D_{c}\,D_{p}\,N_{1}-B_{c}\,C_{c}\,D_{p}\,N_{1}}{D_{c}D_{p}N_{1}+1} & -\frac{B_{c}\,C_{p}}{D_{c}D_{p}N_{1}+1}\\
	\frac{B_{p}\,C_{rf}\,D_{c}\,N_{1}\,\sigma_{r}}{D_{c}D_{p}N_{1}+1} & \frac{B_{p}\,C_{\beta f}\,N_{2}\,\sigma_{\beta}}{D_{c}D_{p}N_{1}+1} & \frac{B_{p}\,C_{df}\,\sigma_{d}}{D_{c}D_{p}N_{1}+1} & \frac{B_{p}\,C_{c}\,N_{1}}{D_{c}D_{p}N_{1}+1} & \frac{A_{p}+A_{p}\,D_{c}\,D_{p}\,N_{1}-B_{p}\,C_{p}\,D_{c}\,N_{1}}{D_{c}D_{p}N_{1}+1}\\
	\frac{C_{rf}\,D_{c}\,\sigma_{r}}{D_{c}D_{p}N_{1}+1} & -\frac{C_{\beta f}\,D_{c}\,D_{p}\,N_{2}\,\sigma_{\beta}}{D_{c}D_{p}N_{1}+1} & -\frac{C_{df}\,D_{c}\,D_{p}\,\sigma_{d}}{D_{c}D_{p}N_{1}+1} & \frac{C_{c}}{D_{c}D_{p}N_{1}+1} & -\frac{C_{p}\,D_{c}}{D_{c}D_{p}N_{1}+1}\\
	0 & C_{\beta f}\,\sigma_{\beta} & 0 & 0 & 0
	\end{array}\right)
	\end{equation}
	\begin{equation}\label{e2}
	\mathbf{B}=\left(\begin{array}{ccc}
	B_{rf} & 0 & 0\\
	0 & 0 & B_{bf}\\
	0 & B_{df} & 0\\
	\frac{B_{c}\,D_{rf}\,\sigma_{r}}{D_{c}D_{p}N_{1}+1} & -\frac{B_{c}\,D_{df}\,D_{p}\,\sigma_{d}}{D_{c}D_{p}N_{1}+1} & -\frac{B_{c}\,D_{bf}\,D_{p}\,N_{2}\,\sigma_{\beta}}{D_{c}D_{p}N_{1}+1}\\
	\frac{B_{p}\,D_{c}\,D_{rf}\,N_{1}\,\sigma_{r}}{D_{c}D_{p}N_{1}+1} & \frac{B_{p}\,D_{df}\,\sigma_{d}}{D_{c}D_{p}N_{1}+1} & \frac{B_{p}\,D_{bf}\,N_{2}\,\sigma_{\beta}}{D_{c}D_{p}N_{1}+1}\\
	\frac{D_{c}\,D_{rf}\,\sigma_{r}}{D_{c}D_{p}N_{1}+1} & -\frac{D_{c}\,D_{df}\,D_{p}\,\sigma_{d}}{D_{c}D_{p}N_{1}+1} & -\frac{D_{bf}\,D_{c}\,D_{p}\,N_{2}\,\sigma_{\beta}}{D_{c}D_{p}N_{1}+1}\\
	0 & 0 & D_{bf}\,\sigma_{\beta}
	\end{array}\right)
	\end{equation} 
\end{figure*}

The values of $\mathbf{A}$ and $\mathbf{B}$ for the system of Fig. \ref{sl} are shown in \eqref{e1} and \eqref{e2}. The corresponding state vector is:
\[\mathbf{x}(t)=\left[\begin{array}{ccccc}
x_{rf} & x_{\beta f} & x_{df} & x_{C} & x_{P}\end{array}\right]^T\]
with the states as depicted in Fig. \ref{sl}. To calculate the values of $N_1$, $N_2$ and $M$, the values of mean and standard deviation of the first actuator input, $\hat{u}_1\left(t\right)$, are required, along with correlation between the two actuator inputs. This is obtained by solving the Lyapunov equation \eqref{lyapeq} with $\sum_{i=1}^{n}\mathbf{b_{i}}\mathbf{b_{i}}^{\mathbf{T}}=~\mathbf{BB^T}$. The following formula can be used for this purpose \cite{Jarlebring}:
\begin{equation}\label{lyap}
\left(\mathbf{I_{n}}\otimes\mathbf{A}+\mathbf{A^{T}}\otimes\mathbf{I_{n}}\right)\textrm{vec}\left(\mathbf{\Sigma}\right)=-\textrm{vec}\left(\mathbf{B}\right)
\end{equation}
where $\mathbf{\Sigma}=E\left[\mathbf{x}(t)\mathbf{x^T}(t)\right]$, $\mathbf{I_{n}}$ is the $n\times n$ identity matrix, and $\textrm{vec}\left(\cdot\right)$ is the vectorization operator.
First, consider only the zero-mean parts of the signals. In that case, if $\hat{u}_1(t)=\mathbf{C_{1}}\mathbf{x}(t)$, then:
\begin{equation}\label{Csu}
\mathbf{C_{1}}=\left[\begin{array}{c}
\frac{C_{rf}D_{c}\sigma_{r}}{D_{c}D_{p}N_{1}+1}\\
\frac{C_{bf}D_{c}D_{p}N_{2}\sigma_{\beta}}{D_{c}D_{p}N_{1}+1}\\
\frac{C_{df}D_{c}D_{p}\sigma_{d}}{D_{c}D_{p}N_{1}+1}\\
\frac{C_{c}}{D_{c}D_{p}N_{1}+1}\\
\frac{C_{p}D_{c}}{D_{c}D_{p}N_{1}+1}
\end{array}\right]^{T}
\end{equation}
Hence,
\begin{equation}\label{s1}
\sigma_{\hat{1}}^2=\mathbf{C_{1}}\mathbf{\Sigma}\mathbf{C_{1}}^T
\end{equation}
where $\sigma_{\hat{1}}$ is the standard deviation of $\hat{u}_1$ and $\mathbf{\Sigma}$ is as defined in \eqref{sigma}.

Taking only the means of the signals as constant inputs to the system, the mean of the actuator input can be found to be:
\begin{equation}\label{muu}
\mu_{\hat{1}}=-\frac{\left(m+\mu_{d}+N_{2}\mu_{2}-\frac{1}{P_{dc}\mu_{r}}\right)}{N_{1}+\frac{1}{C_{dc}P_{dc}}}
\end{equation}
where $C_{dc}$ and $P_{dc}$ are the DC gains of $C(s)$ and $P(s)$ respectively, and
\begin{equation}\label{meq}
m=M-N_1\mu_{\hat{1}}-N_2\mu_{2}
\end{equation}

\subsection{Correlation between Actuator Inputs}
To find the correlation coefficient between the actuator inputs $\hat{u}_1(t)$ and $u_2(t)$, it is noted that if $u_2(t)=\mathbf{C_{2}x}(t)$, then:
\begin{equation}\label{Cbeta}
\mathbf{C_{2}}=\left[\begin{array}{c}
	0\\
	C_{bf}\sigma_{2}\\
	0\\
	0\\
	0
\end{array}\right]^{T}\end{equation}
Hence, the correlation coefficient:
\begin{equation}\label{rho}
\rho=\frac{\mathbf{C_{1}\Sigma C_{2}^T}}{\sigma_{\hat{1}}\sigma_{2}}
\end{equation}

\subsection{Solution of Equations}
The values of $N_1$, $N_2$ and $M$ can be found by solving the system of equations \eqref{u2pmin}-\eqref{me2}, \eqref{e1}-\eqref{rho}. From \eqref{n1e2}, \eqref{n2e2} and \eqref{me2}, it can be seen that $N_1$, $N_2$ and $M$ are functions of $\mu_{\hat{1}}$, $\sigma_{\hat{1}}$, and $\rho$, i.e.,
\begin{equation}\label{n1nm}
N_{1}=\mathcal{F}_{N_{1}}\left(\mu_{\hat{1}},\sigma_{\hat{1}},\rho\right)
\end{equation}
\begin{equation}\label{n2nm}
N_{2}=\mathcal{F}_{N_{2}}\left(\mu_{\hat{1}},\sigma_{\hat{1}},\rho\right)
\end{equation}
\begin{equation}\label{mnm}
M=\mathcal{F}_{M}\left(\mu_{\hat{1}},\sigma_{\hat{1}},\rho\right)
\end{equation}
Using \eqref{muu}, \eqref{meq} and \eqref{mnm}, the following equation can be written,
\begin{equation}\label{meq1}
\frac{1}{P_{dc}\mu_{r}}-\frac{\mu_{\hat{1}}}{C_{dc}P_{dc}}-\mu_{d}=\mathcal{F}_{M}\left(\mu_{\hat{1}},\sigma_{\hat{1}},\rho\right)
\end{equation}

Equation \eqref{muu} implies that $\mu_{\hat{1}}$ is a function of $N_1$, $N_2$ and $M$, i.e.,
\begin{equation}\label{mu1nm}
\mu_{\hat{1}}=\mathcal{F}_{\mu_{\hat{1}}}\left(N_{1},N_{2},M\right)
\end{equation}
Furthermore, \eqref{s1} and \eqref{rho} imply that $\sigma_{\hat{1}}$ and $\rho$ are functions of $N_1$ and $N_2$, i.e.,
\begin{equation}\label{s1nm}
\sigma_1=\mathcal{F}_{\sigma_{\hat{1}}}\left(N_{1},N_{2}\right)
\end{equation}
\begin{equation}\label{rnm}
\rho=\mathcal{F}_{\rho}\left(N_{1},N_{2}\right)
\end{equation}
Using \eqref{mu1nm}-\eqref{rnm}, the system of equations \eqref{u2pmin}-\eqref{me2}, \eqref{e1}-\eqref{rho} can be reduced to an equivalent system consisting of \eqref{n1nm}, \eqref{n2nm} and \eqref{meq1}, which together with \eqref{mu1nm}-\eqref{rnm}, can be written as a system of 3 equations with 3 unknowns, $N_1$, $N_2$ and $M$:
\begin{equation*}
N_{1}=\mathcal{F}_{N_{1}}\left(\mathcal{F}_{\mu_{\hat{1}}}\left(N_{1},N_{2},M\right),\mathcal{F}_{\sigma_{\hat{1}}}\left(N_{1},N_{2}\right),\mathcal{F}_{\rho}\left(N_{1},N_{2}\right)\right)
\end{equation*}
\begin{equation*}
N_{2}=\mathcal{F}_{N_{2}}\left(\mathcal{F}_{\mu_{\hat{1}}}\left(N_{1},N_{2},M\right),\mathcal{F}_{\sigma_{\hat{1}}}\left(N_{1},N_{2}\right),\mathcal{F}_{\rho}\left(N_{1},N_{2}\right)\right)
\end{equation*}
\begin{equation*}
\begin{gathered}
\frac{1}{P_{dc}\mu_{r}}-\frac{\mathcal{F}_{\mu_{\hat{1}}}\left(N_{1},N_{2},M\right)}{C_{dc}P_{dc}}-\mu_{d}\\
=\mathcal{F}_{M}\left(\mathcal{F}_{\mu_{\hat{1}}}\left(N_{1},N_{2},M\right),\mathcal{F}_{\sigma_{\hat{1}}}\left(N_{1},N_{2}\right),\mathcal{F}_{\rho}\left(N_{1},N_{2}\right)\right)
\end{gathered}
\end{equation*}

A sufficient condition of the existence of solutions for the system of equations \eqref{n1nm}, \eqref{n2nm} and \eqref{meq1} is discussed in the following Theorem.

\begin{theorem}\label{exist}
Let $\mathcal{N}_{1}$, $\mathcal{N}_{2}$ and $\mathcal{M}$ denote the ranges of $\mathcal{F}_{N_{1}}$, $\mathcal{F}_{N_{2}}$ and $\mathcal{F}_{M}$ respectively in \eqref{n1nm}-\eqref{mnm}, and let $\overline{\mathcal{N}}_{1}$, $\overline{\mathcal{N}}_{2}$ and $\overline{\mathcal{M}}$ denote their closures. %Let $\overline{\mathbf{A}}\left(N_1\right)=\mathbf{A}$ and $\overline{\mathbf{B}}\left(N_1\right)=\mathbf{B}$, where $\mathbf{A}$ and $\mathbf{B}$ are as in \eqref{e1} and \eqref{e2}. 
Assume that the following hold:

\begin{enumerate}
\item All the eigenvalues of $\mathbf{A}$ in \eqref{e1} are in the open left half plane $\forall N_1\in{\overline{\mathcal{N}}_{1}}$, $N_2\in{\overline{\mathcal{N}}_{2}}$.
\item $\overline{\mathcal{N}}_{1}$, $\overline{\mathcal{N}}_{2}$ and $\mathcal{\overline{M}}$ are compact and convex sets.
\item If $C_{dc}=\infty$, then $\frac{1}{P_{dc}\mu_{r}}-\mu_{d}\in\mathcal{M}$. If $P_{dc}=\infty$ but $C_{dc}\neq\infty$, then $-\mu_{d}\in\mathcal{M}$.
\item If $C_{dc}=\infty$ or $P_{dc}=\infty$ or both, then:
\[\left|\frac{\partial}{\partial\mu_1}\left[\mathcal{F}_M\left(\mu_{\hat{1}},\sigma_{\hat{1}},\rho\right)\right]\right|\geq d\]
for a fixed constant $d>0$.
\end{enumerate}

Then, the system of equations \eqref{n1nm}, \eqref{n2nm} and \eqref{meq1} has a solution in $\overline{\mathcal{N}}_{1}$, $\overline{\mathcal{N}}_{2}$ and $\overline{\mathcal{M}}$. 
\end{theorem}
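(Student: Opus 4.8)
\emph{Proof idea.} The plan is to recast the simultaneous equations \eqref{n1nm}, \eqref{n2nm} and \eqref{meq1} (with \eqref{mu1nm}, \eqref{s1nm}, \eqref{rnm} substituted) as the fixed-point equation $\Phi(N_1,N_2,M)=(N_1,N_2,M)$ for a suitable continuous self-map $\Phi$ of the set $K:=\overline{\mathcal N}_{1}\times\overline{\mathcal N}_{2}\times\overline{\mathcal M}$, and then invoke Brouwer's fixed-point theorem. By Assumption~2, $K$ is a nonempty, compact, convex subset of $\mathbb R^{3}$, so Brouwer applies as soon as $\Phi$ is shown to be a continuous self-map; its fixed points are, by construction, exactly the solutions sought.

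First I would treat the generic case in which both DC gains $C_{dc}$ and $P_{dc}$ are finite. There, eliminating $m$ between \eqref{muu} and \eqref{meq} exhibits $\mu_{\hat1}=\mathcal F_{\mu_{\hat1}}(N_1,N_2,M)$ (eq.~\eqref{mu1nm}) as an explicit continuous function, and a short manipulation of \eqref{muu}–\eqref{meq} shows that along that relation the left-hand side of \eqref{meq1} equals $M$, so \eqref{meq1} is equivalent to $M=\mathcal F_M(\mu_{\hat1},\sigma_{\hat1},\rho)$. Writing $\sigma_{\hat1}=\mathcal F_{\sigma_{\hat1}}(N_1,N_2)$ and $\rho=\mathcal F_{\rho}(N_1,N_2)$ as in \eqref{s1nm}, \eqref{rnm}, I would define $\Phi$ coordinatewise by $\big(\mathcal F_{N_1}(\mu_{\hat1},\sigma_{\hat1},\rho),\ \mathcal F_{N_2}(\mu_{\hat1},\sigma_{\hat1},\rho),\ \mathcal F_{M}(\mu_{\hat1},\sigma_{\hat1},\rho)\big)$. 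That $\Phi$ maps $K$ into itself is immediate, since the three outputs lie in the ranges $\mathcal N_{1},\mathcal N_{2},\mathcal M$, hence in their closures. For continuity: Assumption~1 guarantees that for every $(N_1,N_2)\in\overline{\mathcal N}_{1}\times\overline{\mathcal N}_{2}$ the matrix $\mathbf A$ of \eqref{e1} is Hurwitz, so the Lyapunov equation \eqref{lyap} has a unique solution $\mathbf\Sigma$ depending continuously (indeed smoothly) on $(N_1,N_2)$; by \eqref{s1} and \eqref{rho} this makes $\mathcal F_{\sigma_{\hat1}}$ and $\mathcal F_{\rho}$ continuous, and \eqref{muu}–\eqref{meq} make $\mathcal F_{\mu_{\hat1}}$ continuous. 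The maps $\mathcal F_{N_1},\mathcal F_{N_2},\mathcal F_M$ are continuous in $(\mu_{\hat1},\sigma_{\hat1},\rho)$ because the integrands of \eqref{n1e2}, \eqref{n2e2}, \eqref{me2} are continuous in the parameters and admit integrable majorants (dominated convergence). Composition then gives continuity of $\Phi$, and Brouwer yields a fixed point $(N_1^{\star},N_2^{\star},M^{\star})$, which by the equivalences above solves \eqref{n1nm}, \eqref{n2nm}, \eqref{meq1}.

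Next I would handle the degenerate cases flagged by Assumptions~3 and~4, where $C_{dc}=\infty$, or $P_{dc}=\infty$, or both. There \eqref{muu} no longer defines $\mu_{\hat1}$ as a function of $(N_1,N_2,M)$; instead, letting $\tfrac{1}{C_{dc}P_{dc}}\to 0$ (resp. $\tfrac{1}{P_{dc}\mu_r}\to 0$) in \eqref{muu}–\eqref{meq} forces $M$ to equal the constant $\tfrac{1}{P_{dc}\mu_r}-\mu_d$ (resp. $-\mu_d$), which lies in $\mathcal M$ by Assumption~3. With $M$ pinned to that constant, \eqref{meq1} reduces, for each $(N_1,N_2)$, to the scalar equation $\mathcal F_M(\mu_{\hat1},\sigma_{\hat1},\rho)=\text{const}$ in the single unknown $\mu_{\hat1}$; Assumption~4 makes $\mu_{\hat1}\mapsto\mathcal F_M(\mu_{\hat1},\cdot,\cdot)$ strictly monotone with derivative bounded away from $0$, so by the intermediate value theorem (using that $\mathcal F_M(\cdot,\sigma_{\hat1},\rho)$ sweeps an interval containing the target, since $\mathcal F_M\sim\mu_1$ as $\mu_1\to\pm\infty$) there is a unique solution $\mu_{\hat1}=G(N_1,N_2)$, and the uniform derivative bound together with the implicit function theorem makes $G$ continuous. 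Replacing $\mu_{\hat1}$ by $G(N_1,N_2)$ and freezing the $M$-coordinate at the constant, I again obtain a continuous self-map of the compact convex set (here effectively the face $\overline{\mathcal N}_{1}\times\overline{\mathcal N}_{2}\times\{M_{\mathrm{const}}\}$), and Brouwer again furnishes a fixed point solving \eqref{n1nm}, \eqref{n2nm}, \eqref{meq1}.

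The main obstacle I anticipate is not Brouwer itself but the bookkeeping around the degenerate DC-gain cases: verifying that the reduced scalar equation for $\mu_{\hat1}$ is genuinely solvable for \emph{every} admissible pair $(\sigma_{\hat1},\rho)$, and that the resulting $\mu_{\hat1}$ together with $\sigma_{\hat1}\in\mathcal F_{\sigma_{\hat1}}(\overline{\mathcal N}_{1}\times\overline{\mathcal N}_{2})$ and $\rho\in\mathcal F_{\rho}(\overline{\mathcal N}_{1}\times\overline{\mathcal N}_{2})$ stays inside the set on which $\mathcal F_{N_1},\mathcal F_{N_2},\mathcal F_M$ and the Lyapunov solution are continuous (in particular $\sigma_{\hat1}>0$ and $|\rho|<1$) — this is exactly where Assumptions~3 and~4 and the passage to the closures $\overline{\mathcal N}_{i},\overline{\mathcal M}$ are needed. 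A secondary technical point is justifying continuity (and differentiation) under the integral sign in \eqref{n1e2}–\eqref{me2}, including as $\rho\to\pm1$ and $\sigma_{\hat1}\to 0$, which requires exhibiting uniform integrable bounds on the $\mathrm{erf}$- and Gaussian-type integrands.
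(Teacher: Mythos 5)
Your proposal follows essentially the same route as the paper: in both arguments the finite and infinite DC-gain cases are treated separately, a continuous self-map of the compact convex set $\overline{\mathcal N}_{1}\times\overline{\mathcal N}_{2}\times\overline{\mathcal M}$ is assembled (Assumption~1 giving continuity of the Lyapunov solution and hence of $\sigma_{\hat 1},\rho$; Assumption~4 giving, via a global implicit-function-type argument, a continuous $\mu_{\hat 1}=h(\sigma_{\hat 1},\rho)$ in the degenerate case), and Brouwer's fixed-point theorem is invoked. Your explicit check that \eqref{meq1} collapses to $M=\mathcal F_{M}(\mu_{\hat 1},\sigma_{\hat 1},\rho)$ once \eqref{muu} and \eqref{meq} are substituted makes the self-map construction slightly more careful than the paper's terse statement, but the substance of the proof is the same.
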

\begin{proof}
We consider two cases. First assume that $C_{dc}\neq\infty$ and $P_{dc}\neq\infty$. The first assumption implies that for any value of $N_1\in{\overline{\mathcal{N}}_{1}}$ and $N_2\in{\overline{\mathcal{N}}_{2}}$, there is a unique positive definite solution of \eqref{lyap}. Hence, $\sigma_{\hat{1}}$ and $\rho$ exist, from \eqref{s1} and \eqref{rho} respectively, and are continuous functions of $N_1$ and $N_2$. Also, from \eqref{muu} and \eqref{meq}, $\mu_{\hat{1}}$ is a continuous function of $N_1$, $N_2$ and $M$. Therefore, the both the sides of \eqref{n1nm}, \eqref{n2nm} and \eqref{meq1} form continuous functions of $N_1$, $N_2$ and $M$.  Since the second assumption holds, by Brouwer's fixed point theorem \cite{smart1980}, \eqref{n1nm}, \eqref{n2nm} and \eqref{meq1} have a solution and the result follows.

For the second case, assume either $C_{dc}=\infty$ or $P_{dc}=~\infty$ or both. Then the LHS of \eqref{meq1} reduces to a constant, independent of $N_1$, $N_2$ and $M$. Since the range of $\mathcal{F}_{M}$ is $\mathcal{M}$, the third assumption ensures a necessary condition for \eqref{meq1} to have a solution. Let $\mathbf{p_1}=\left[\begin{array}{cc}
\sigma_{\hat{1}} & \rho\end{array}\right]^{T}$ and $\mathbf{p_2}=\mu_{\hat{1}}$. Since $\mu_{\hat{1}}$, $\sigma_{\hat{1}}$ and $\rho$ are continuous functions of $N_1$, $N_2$ and/or $M$, so are $\mathbf{p_1}$ and $\mathbf{p_2}$. In addition, since $\mathcal{F}_M$ is continuous and also continuously differentiable with respect to $\mu_{\hat{1}}$, $f\left(\mathbf{p_{1}},\mathbf{p_{2}}\right)=\mathcal{F}_M\left(\mu_{\hat{1}},\sigma_{\hat{1}},\rho\right)$ is a continuous mapping from $\mathbb{R}^{2}\rightarrow\mathbb{R}$ and is continuously differentiable in $\mathbf{x_2}$. By the fifth assumption and Theorem 1 in \cite{zhang2006}, for any value of $N_1\in{\overline{\mathcal{N}}_{1}}$ and $N_2\in{\overline{\mathcal{N}}_{2}}$, and hence for any $\sigma_{\hat{1}}$ and $\rho$ (which are continuous functions of $N_1$ and $N_2$), there exists a unique $\mu_{\hat{1}}=\mathbf{x_{2}}=g\left(\mathbf{x_{1}}\right)=h\left(\sigma_{\hat{1}},\rho\right)$ where $g$ and $h$ are continuous. Hence, the resulting RHS of \eqref{meq1} is continuous. By assumption 2 and Brouwer's fixed point theorem, the result follows.
\end{proof}

There is no closed form solution for this system of equations, but it can be numerically arrived at by using an algorithm like Trust-Region Dogleg \cite{powell1968}, which is used in, for example, MATLAB's \texttt{fsolve}.

\section{Comparison with single variable QLC}\label{cSQL}

The essential difference between single variable QLC and multi-variable QLC is the presence of the second actuator input $u_2(t)$. This leads to effects peculiar only to multi-variable QLC. Two of them are described in the following subsections.

\subsection{Effect of secondary input on primary input saturation}
A particular point of interest is the fact that whether the addition of noise to the actuator bounds leads to an increased saturation in the primary actuator input or not. The following Theorem investigates that.

\begin{theorem}\label{tp}
The probability that the primary input $U_1$ is not saturated in a bivariate saturation nonlinearity with jointly Gaussian inputs $U_1$ and $U_2$ is quantified by the input quasilinear gain, $N_1$, i.e.,
\[P\left(\alpha-U_{2}<U_{1}<\beta+U_{2}\right)=N_{1}\]
\end{theorem}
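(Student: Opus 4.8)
The plan is to recognize that the quasilinear gain $N_1$, by Theorem 1 (equation \eqref{neq}), equals $E\left[\frac{\partial}{\partial u_1}\textrm{sat}_{\alpha,\beta}\left(u_1,u_2\right)\right]$, and then observe that the partial derivative of the bivariate saturation with respect to its primary input is precisely the indicator function of the non-saturation (linear) region. Indeed, from the explicit form of the gradient computed just before \eqref{n1e1}, one has
\[
\frac{\partial}{\partial u_1}\textrm{sat}_{\alpha,\beta}\left[u_1,u_2\right]=\begin{cases}1, & \alpha-u_2\leq u_1\leq\beta+u_2 \text{ and } u_2\geq\max(-\beta,\alpha)\\ 0, & \text{otherwise}\end{cases}
\]
so that this derivative is the indicator $\mathbf{1}_{\{\alpha-U_2<U_1<\beta+U_2\}}$ (the boundary set has measure zero under the jointly Gaussian law, so strict versus non-strict inequalities are immaterial, and when $u_2<\max(-\beta,\alpha)$ the non-saturation interval is empty).

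First I would state that the bivariate saturation is piecewise differentiable, so Theorem 1 applies and $N_1=E\left[\nabla_1 \textrm{sat}_{\alpha,\beta}(U_1,U_2)\right]$. Second, I would substitute the piecewise-constant expression for $\nabla_1\textrm{sat}_{\alpha,\beta}$ displayed above, identifying it with the indicator of the event $\{\alpha-U_2<U_1<\beta+U_2\}$; here I would note that this event is automatically contained in $\{U_2\geq\max(-\beta,\alpha)\}$, since $\alpha-U_2<\beta+U_2$ forces $U_2>\tfrac{\alpha-\beta}{2}$ and, combined with the individual inequalities, one checks $U_2\geq\max(-\beta,\alpha)$ on the non-saturation region (or more simply, outside that halfspace the nonlinearity is identically zero and its primary derivative vanishes). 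Third, I would use the elementary fact that the expectation of an indicator function is the probability of the corresponding event:
\[
N_1=E\left[\mathbf{1}_{\{\alpha-U_2<U_1<\beta+U_2\}}\right]=P\left(\alpha-U_2<U_1<\beta+U_2\right),
\]
which is exactly the claim. As a cross-check one can note that this matches \eqref{n1e1}, where the integrand is $(1)$ over the region $\max(-\beta,\alpha)<u_2<\infty$, $\alpha-u_2<u_1<\beta+u_2$ against the bivariate Gaussian density $\mathcal{N}$.

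There is really no hard analytic obstacle here; the result is essentially a restatement of \eqref{neq} specialized to the saturation, and the only thing requiring a moment's care is the bookkeeping of the two-layer piecewise definition in \eqref{bsat}—specifically confirming that the indicator of the linear region already encodes the condition $u_2\geq\max(-\beta,\alpha)$, so that no mass is lost or double-counted. I would therefore devote the bulk of the write-up to making that region-equivalence explicit and to justifying the interchange of expectation and the piecewise evaluation (trivial, since $\nabla_1\textrm{sat}_{\alpha,\beta}$ is bounded by $1$ and measurable), and then close with the indicator-to-probability identity.
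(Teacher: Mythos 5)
Your proposal is correct in essence and takes the same route as the paper: the paper starts from $P(\alpha-U_{2}<U_{1}<\beta+U_{2})$, writes it as a double integral of the bivariate Gaussian density over the non-saturation region, and identifies the result with \eqref{n1e1}; you run the identical identification in the opposite direction, starting from $N_{1}=E\left[\partial_{u_{1}}\mathrm{sat}_{\alpha,\beta}\right]$ and recognizing that derivative as an indicator whose expectation is a probability. The indicator phrasing is slightly cleaner, but it is the same computation.

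One auxiliary claim in your write-up is false, however: the event $\{\alpha-U_{2}<U_{1}<\beta+U_{2}\}$ is \emph{not} automatically contained in $\{U_{2}\geq\max(-\beta,\alpha)\}$. The inequality $\alpha-U_{2}<\beta+U_{2}$ only forces $U_{2}>(\alpha-\beta)/2$, and since the maximum of two numbers exceeds their average whenever they differ, $(\alpha-\beta)/2<\max(-\beta,\alpha)$ for every asymmetric saturation $\alpha\neq-\beta$. Concretely, with $\alpha=-2$, $\beta=1$, $u_{2}=-1.2$, $u_{1}=-0.5$ one has $\alpha-u_{2}=-0.8<u_{1}<-0.2=\beta+u_{2}$ while $u_{2}<\max(-\beta,\alpha)=-1$; the individual inequalities on $U_{1}$ supply no further constraint on $U_{2}$, so the containment cannot be rescued. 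What is true---and what your parenthetical backup and the paper's own proof actually use---is that $\partial_{u_{1}}\mathrm{sat}_{\alpha,\beta}$ vanishes on $\{u_{2}<\max(-\beta,\alpha)\}$ by the second branch of \eqref{bsat}, so the indicator argument yields $N_{1}=P\left(\{\alpha-U_{2}<U_{1}<\beta+U_{2}\}\cap\{U_{2}\geq\max(-\beta,\alpha)\}\right)$. Equating this with the unrestricted probability in the theorem statement silently discards the slab $(\alpha-\beta)/2<U_{2}<\max(-\beta,\alpha)$, which carries positive Gaussian measure in the asymmetric case; the paper glosses over the same point with the remark that ``$U_{1}=0$'' there. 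Your proof is therefore no less rigorous than the paper's, but you should drop the false containment and instead state explicitly that the identity is exact for the intersected event (equivalently, that the probability is taken over the actuator's operational region $U_{2}\geq\max(-\beta,\alpha)$).
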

\begin{proof}
By definition of the joint probability distribution function,
\[
\begin{gathered}
P\left(\alpha-U_{2}<U_{1}<\beta+U_{2}\right)\\
=\int_{-\infty}^{\infty}\int_{\alpha-u_{2}}^{\beta+u_{2}}p_{U_{1},U_{2}}\left(u_{1},u_{2}\right)du_{1}du_{2}
\end{gathered}\]
where $p_{U_{1},U_{2}}\left(u_{1},u_{2}\right)$ is the joint probability distribution function (PDF) of $U_1$ and $U_2$. Since $U_1$ and $U_2$ are jointly Gaussian,
\[p_{U_{1},U_{2}}\left(u_{1},u_{2}\right)=\mathcal{N}\left(\mu_{1},\mu_{2},\sigma_{1},\sigma_{2},\rho\right)\]
which is the bivariate Gaussian PDF defined in \eqref{npdf}. By the definition of the bivariate saturation \eqref{bsat},
\[U_1=0 \textrm{ for } U_2<\max\left(-\beta,\alpha\right)\]
Hence, from \eqref{n1e1},
\[
\begin{gathered}
P\left(\alpha-U_{2}<U_{1}<\beta+U_{2}\right)\\
=\int_{\max\left(-\beta,\alpha\right)}^{\infty}\int_{\alpha-u_{2}}^{\beta+u_{2}}\mathcal{N}\left(\mu_{1},\mu_{2},\sigma_{1},\sigma_{2},\rho\right)du_{1}du_{2}\\
=N_1
\end{gathered}\]
\end{proof}

From Theorem \ref{tp}, it can be seen that the probability that the input is \textit{not} saturated is quantified by $N_1$, which in turn, depends on parameters $\mu_2$, $\sigma_2$ and $\rho$, which are exclusive to the multi-variable case, apart from $\mu_{\hat{1}}$ and $\sigma_{\hat{1}}$, which were present even in the single-variable case. One plot, showing the dependence of $N_1$ on actuator noise $\sigma_2$ and the actuator authority $\beta=-\alpha$, is shown in Fig. \ref{fig:N1sigma2}. It can be seen that as the actuator bounds become more variable, the probability that the primary actuator input is saturated approaches 0.5, as proved in the following Theorem. Intuitively, this is because sometimes the actuator bounds increase, allowing for lesser saturation, and sometimes they decrease, resulting in more saturation.

\begin{theorem}
With other parameters fixed, $N_1 \rightarrow 0.5$ as $\sigma_2 \rightarrow \infty$.
\end{theorem}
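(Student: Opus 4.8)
The plan is to reduce the limit to an elementary Gaussian integral by conditioning the primary input on the secondary input and then rescaling. By Theorem~\ref{tp} (equivalently, \eqref{n1e1}), $N_1$ is the integral of the bivariate Gaussian density over the set $\{u_2\ge\max(-\beta,\alpha),\ \alpha-u_2\le u_1\le\beta+u_2\}$. The first thing I would verify is that on the whole range $u_2\ge\max(-\beta,\alpha)$ one has $\alpha-u_2\le\beta+u_2$ — this holds because $\max(-\beta,\alpha)\ge(\alpha-\beta)/2$ — so the inner integral is a genuine conditional probability and we may write
\[
N_1=\int_{\max(-\beta,\alpha)}^{\infty}p_{U_2}(u_2)\,P\big(\alpha-u_2<U_1<\beta+u_2\,\big|\,U_2=u_2\big)\,du_2 .
\]
Since $(U_1,U_2)$ is jointly Gaussian with $|\rho|<1$ (implicit in \eqref{npdf}), $U_1\mid U_2=u_2\sim\mathcal N\!\big(\mu_1+\rho\tfrac{\sigma_1}{\sigma_2}(u_2-\mu_2),\,\sigma_1^2(1-\rho^2)\big)$.

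The key step is the substitution $u_2=\mu_2+\sigma_2 t$: it converts $p_{U_2}(u_2)\,du_2$ into the standard normal density $\phi(t)\,dt$ (free of $\sigma_2$), while the conditional mean becomes $\mu_1+\rho\sigma_1 t$ and the conditional variance becomes $\sigma_1^2(1-\rho^2)$, both also free of $\sigma_2$. The lower limit becomes $t_{\min}=(\max(-\beta,\alpha)-\mu_2)/\sigma_2$, which tends to $0$. After this change of variables $N_1$ takes the form $\int_{t_{\min}}^{\infty}\phi(t)\big[\Phi(a_{\sigma_2}(t))-\Phi(b_{\sigma_2}(t))\big]\,dt$, where $a_{\sigma_2}(t)=\big(\beta+\mu_2-\mu_1+(\sigma_2-\rho\sigma_1)t\big)/\big(\sigma_1\sqrt{1-\rho^2}\big)$ and $b_{\sigma_2}(t)=\big(\alpha-\mu_2-\mu_1-(\sigma_2+\rho\sigma_1)t\big)/\big(\sigma_1\sqrt{1-\rho^2}\big)$. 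Letting $\sigma_2\to\infty$: for each fixed $t>0$ we have $a_{\sigma_2}(t)\to+\infty$ and $b_{\sigma_2}(t)\to-\infty$, so the bracket tends to $1$; for fixed $t<0$ the point lies below the (vanishing) lower limit $t_{\min}$ once $\sigma_2$ is large, so the integrand is eventually $0$. The integrand is uniformly bounded by $\phi(t)$, so dominated convergence gives $N_1\to\int_0^{\infty}\phi(t)\,dt=\tfrac12$.

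The routine parts — the conditioning identity and the substitution — carry no real difficulty. The step I expect to require the most care is the dominated-convergence argument: I must exhibit the integrable majorant $\phi$, identify the almost-everywhere pointwise limit $\phi(t)\mathbf 1_{\{t>0\}}$ of the integrand, and handle the moving lower endpoint $t_{\min}\to 0$ (harmless precisely because the integrand is uniformly bounded near $t=0$). I would also remark that the degenerate case $|\rho|=1$ is not covered here — it is already ruled out by the form of \eqref{npdf} — since there $U_1$ concentrates at $\mu_1$ and a separate, elementary argument applies.
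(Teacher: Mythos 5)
Your argument is correct and, after the substitution $t=(u_2-\mu_2)/\sigma_2$, your integrand $\phi(t)\left[\Phi(a_{\sigma_2}(t))-\Phi(b_{\sigma_2}(t))\right]$ is exactly the paper's expression \eqref{n1e2} with $t=u_2'$, $\gamma_1=-b_{\sigma_2}(t)/\sqrt{2}$ and $\gamma_2=a_{\sigma_2}(t)/\sqrt{2}$, so this is essentially the same proof as the paper's: the bracket tends to $1$ pointwise for $t>0$ while the lower limit tends to $0$, giving $\int_0^\infty\phi(t)\,dt=\tfrac12$. The only differences are cosmetic or in rigor: you derive the single-integral form by conditioning rather than by the paper's linear change of variables, and you supply the dominated-convergence justification (majorant $\phi$, moving endpoint, restriction of the pointwise limit to $t>0$) that the paper's two-line proof leaves implicit.
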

\begin{proof}
With other parameters fixed, $\sigma_2 \rightarrow \infty \Rightarrow \gamma_1 \rightarrow \infty$ and $\gamma_2 \rightarrow \infty$, as per \eqref{g1} and \eqref{g2}. Thus, $\textrm{erf}\left(\gamma_{1}\right)+\textrm{erf}\left(\gamma_{2}\right)\rightarrow2$. Also, $u'_{2\min} \rightarrow 0$ from \eqref{u2pmin}. Hence,
\[\lim_{\sigma_{2}\rightarrow\infty}N_{1}=\int_{0}^{\infty}\frac{\sqrt{2}}{4\sqrt{\pi}}e^{-\frac{{u'}_{2}^{2}}{2}}\left(2\right)du'_{2}=0.5\]
\end{proof}
\begin{figure}[t]
	\centering
	\includegraphics[width=1\columnwidth]{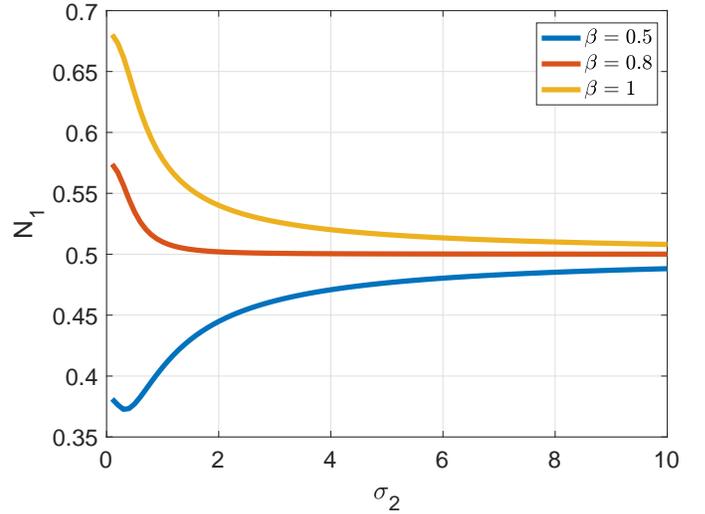}
	\caption{Plot of $N_1$ as a function of $\sigma_2$ for varying $\beta=-\alpha$}
	\label{fig:N1sigma2}
\end{figure}

\subsection{Effect of Correlation between Actuator Inputs}

The primary actuator input $\hat{u}_1(t)$ in Fig. \ref{sl} is correlated to the secondary actuator input $u_2(t)$ due to the feedback provided by the closed loop system. To find the effect of correlation between the actuator inputs, the value of the correlation coefficient $\rho$ was plotted for varying levels of asymmetry and standard deviation of the secondary actuator input. The results are shown in Fig. \ref{fig:e4p1} and Fig. \ref{fig:e5p1}.

\begin{figure}[t]
	\centering
	\includegraphics[width=1\columnwidth]{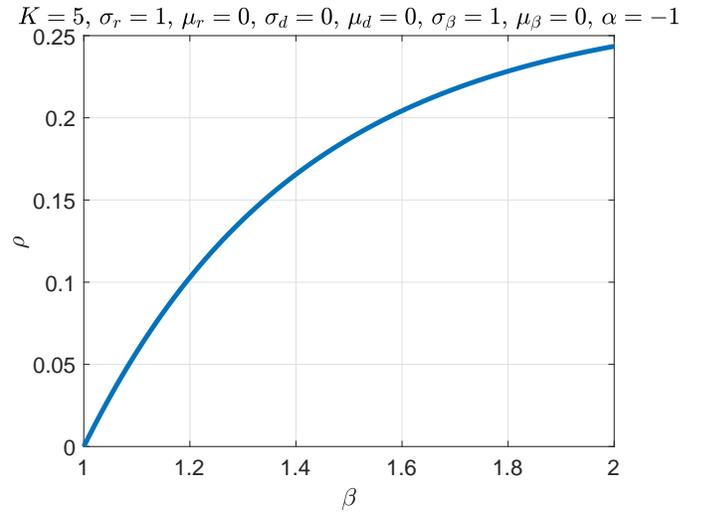}
	\caption{Plot of $\rho$ as a function of asymmetry}
	\label{fig:e4p1}
\end{figure}
\begin{figure}[t]
	\centering
	\includegraphics[width=1\columnwidth]{./Figures/E5P1.eps}
	\caption{Plot of $\rho$ as a function of asymmetry and noise variability}
	\label{fig:e5p1}
\end{figure}

From the figures it can be seen that as the actuator becomes more asymmetric (i.e., $\beta$, the upper limit of saturation, increases while the lower limit, $\alpha$ is fixed) or when the bounds become more variable (i.e. $\sigma_{2}$ increases), the correlation coefficient increases. However, when the actuator nonlinearity is symmetric ($\alpha=-\beta$), the correlation coefficient is constant for any value of $\sigma_{2}$. This is due to the way $N_2$ and the bivariate saturation function is defined: $u_2(t)$ adds to the upper limit, but subtracts from the lower limit.

\section{Accuracy of Stochastic Linearization}\label{asl}
\begin{figure}[t]
	\centering
	\includegraphics[width=1\columnwidth]{./Figures/E29P2}
	\caption{Histogram plot of error accuracy}
	\label{fig:accAO}
\end{figure}
\begin{figure}[t]
	\centering
	\includegraphics[width=1\columnwidth]{./Figures/E29P1}
	\caption{Box plot of output accuracy}
	\label{fig:histAO}
\end{figure}
To investigate the accuracy of the method of stochastic linearization, with focus on the effect of the variability of the actuator noise, a Monte Carlo experiment was performed with the following parameters: 
\[C(s)=K\sim U\left[0.01,50\right]\]
Systems with 2 types of plants were considered. For half of the systems considered,
\[P(s)=\frac{1}{Ts+1},\textrm{ such that }T\sim U\left[0.01,10\right]\]
For the other half,
\[P(s)	=\frac{\omega_{n}^{2}}{s^{2}+2\xi\omega_{n}s+\xi^{2}}\]
such that $\omega_{n}\sim U\left[0.01,10\right], \xi\sim U\left[0.05,2\right]$.
Also, to ensure a fair comparison, the statistical properties of the reference and disturbance signals were assumed to be constant:
\[\mu_{r}=0,\sigma_{r}=1,\mu_{d}=0,\sigma_{d}=1\]
To find the effect of variability in the secondary actuator input $u_2(t)$, the following was assumed about its statistics:
\[\mu_{2}=0, \sigma_{2}\in\left[0,1.25,2.5,3.75,5\right]\]
Finally, the actuator authorities were selected as:
\[\alpha\sim\left[-15,0\right],\beta\sim U\left[0,15\right]\]
3000 systems were considered for simulation, out of which 535 ($\sim 18\%$) unstable systems and those with phase margin $>$ 20 degrees were rejected since they were not practical. All the coloring filters were taken to be of 3rd order Butterworth type, with transfer function \eqref{key}, and cut-off frequency $\Omega=1.43$ kHz, which was found by a separate Monte Carlo experiment to be mean bandwidth of the closed loop systems considered.

\begin{equation}\label{key}
		\small F_{\Omega_{d}}\left(s\right)=\sqrt{\frac{3}{\Omega_{d}}}\left(\frac{\Omega_{d}^{3}}{s^{3}+2\Omega_{d}s^{2}+2\Omega_{d}^{2}s+\Omega_{d}^{3}}\right), \Omega_d=1.43 \textrm{ kHz}
	\end{equation}

The results for accuracy are shown in Figures \ref{fig:accAO} and \ref{fig:histAO}. Fig. \ref{fig:accAO} shows the histogram of the difference in the square root of second moment of the error $e(t)$ in the nonlinear system and the stochastically linearized system, normalized by the square root of second moment of the nonlinear error. It can be seen that stochastic linearization is fairly accurate for most of the systems. Fig. \ref{fig:histAO} shows a box plot of the difference of the square root of second moment of the output between the nonlinear and the stochastically linearized systems, normalized by the square root of second moment of the nonlinear actuator output. It can be seen that as the actuator bounds become more variable, the relative error increases.

\section{Practical Example - Optimal Controller Design}\label{pe}
In this section, a practical example of designing an optimal proportional controller to reduce the standard deviation of the tracking error is presented.

\begin{figure}[t]
	\centering
	\includegraphics[width=1\columnwidth]{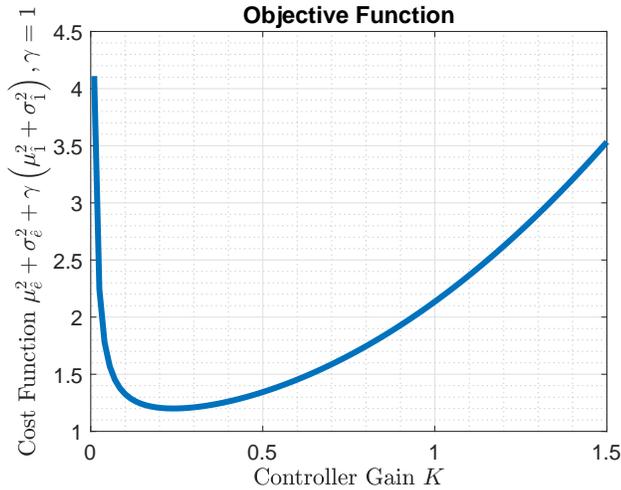}
	\caption{Plot of objective function for system with $P(s) = \frac{10}{s(s+10)}$, $C(s)=~K$, $\alpha=-2$, $\beta=1$, $\mu_1=0$, $\sigma_1=1$, $\mu_2=0$, $\sigma_2=1$ and filter bandwidth 48 rad/s.}
	\label{fig:objF}
\end{figure}

Consider Fig. \ref{fig:Block-Diagram-of}, with $C(s)=D_C=K$ and $P(s) = \frac{10}{s(s+10)}$. The actuator bounds are chosen to be $\alpha=-2$, $\beta=1$, and the input parameters, $\mu_2=0$, $\sigma_2=1$, $\mu_r=0$, $\sigma_r=1$, $\mu_d=0$ and $\sigma_d=1$. The filter bandwidth is chosen to be 48 rad/s, which is close to the system bandwidth. Let the objective function be the sum of the second moment of the tracking error $e(t)$ and that of the primary actuator input $u_1(t)$. It has a well-defined minimum, as seen in Figure \ref{fig:objF}. With this, the following optimization problem can be formulated:

\begin{equation}\label{opt}
\begin{array}{c}
\textrm{min}\;\mu_{\hat{e}}^2+\sigma_{\hat{e}}^2+\gamma\left(\mu_{\hat{1}}^2+\sigma_{\hat{1}}^2\right)\\
\begin{array}{c}
\textrm{subject to}\end{array}\eqref{u2pmin}-\eqref{me2}, \eqref{e1}-\eqref{rho}
\end{array}
\end{equation}
where $\gamma>0$ is a penalty factor, $\sigma_{\hat{1}}$ and $\mu_{\hat{1}}$ are as in \eqref{s1} and \eqref{muu} respectively, and, similarly,
\[\mu_{\hat{e}}=\frac{1}{C_{dc}}\frac{\frac{1}{P_{dc}}\mu_{r}-m-\mu_{d}-N_{2}\mu_{2}}{N_{1}+\frac{1}{C_{dc}}\frac{1}{P_{dc}}}\]
\[\sigma_{\hat{e}}^{2}=\mathbf{C}_{\mathbf{\hat{e}}}\mathbf{\Sigma}\mathbf{C}_{\mathbf{\hat{e}}}^{\mathbf{T}}\] where:
\[\mathbf{C_{\hat{e}}}=\left(\begin{array}{c}
\frac{C_{rf}\sigma_{r}}{D_{c}D_{p}N_{1}+1}\\
-\frac{C_{bf}D_{p}N_{2}\sigma_{2}}{D_{c}D_{p}N_{1}+1}\\
-\frac{C_{df}D_{p}\sigma_{d}}{D_{c}D_{p}N_{1}+1}\\
-\frac{C_{c}D_{p}N_{1}}{D_{c}D_{p}N_{1}+1}\\
-\frac{C_{p}}{D_{c}D_{p}N_{1}+1}
\end{array}\right)^T\]
and $\mathbf{\Sigma}$ is the solution of \eqref{lyap}.

\begin{figure}[t]
	\centering
	\includegraphics[trim={0 0 0 35},width=1.1\columnwidth]{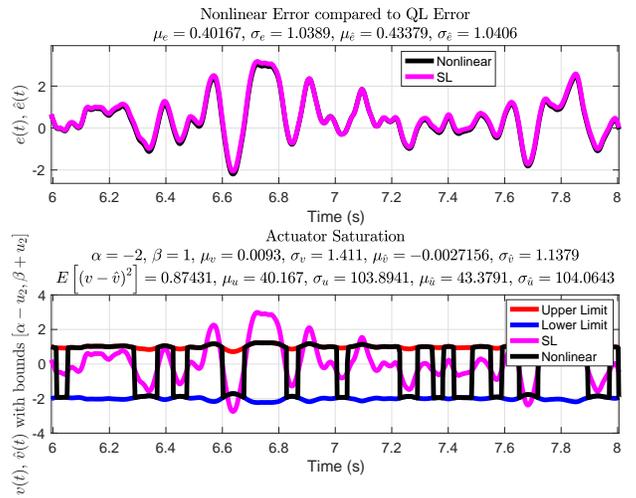}
	\caption{Time series plot for baseline controller.}
	\label{fig:opt1}
\end{figure}

\begin{figure}[t]
	\centering
	\includegraphics[trim={0 0 0 35},width=1.1\columnwidth]{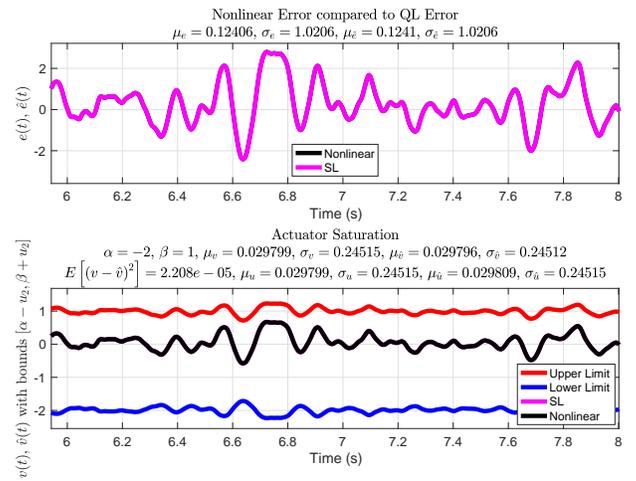}
	\caption{Time series plot for optimal controller}
	\label{fig:opt2}
\end{figure}

The optimization was performed using \texttt{fmincon} of MATLAB\textsuperscript{\textregistered}, with an initial value of $C(s)=100$ and $\gamma=1$. The optimal controller was found to be $C(s)=0.24$. The cost reduced from 1244.5 to 1.2.

The system was simulated in MATLAB/Simulink\textsuperscript{\textregistered} with an initial value of $K=100$, the results of which are shown in Fig. \ref{fig:opt1}. The upper subplot displays the tracking error in the nonlinear system, $e(t)$, and that in the stochastically linearized system, $\hat{e}(t)$. The lower subplot shows the actuator output from the nonlinear system, $v(t)$, and that from the stochastically linearized system, $\hat{v}(t)$, bounded by the actuator limits: $\alpha-u_2(t)$ and $\beta+u_2(t)$. The nonlinear actuator output can be clearly seen to be saturated by the bounds. Fig. \ref{fig:opt2} shows the same system after optimization. It can be seen that the standard deviation of the nonlinear error reduced from 1.04 to 1.02 and its mean reduced from 0.4 to 0.1. Since the optimization also reduced the primary actuator input, it is no longer saturated, and hence, $e(t)$ and $\hat{e}(t)$ coincide.

\section{Conclusion}\label{conc}
In this paper, the theory of stochastic linearization is extended to functions of multiple variables. A general control system with bivariate saturated actuator is considered for analysis, and expressions for equivalent gains and bias have been derived. The accuracy of stochastic linearization was investigated by a Monte Carlo simulation, and was found to be fairly good. Finally, a practical example of optimal control design is presented to show that the method can be used to design optimal controllers.

% \begin{figure}
% 	\centering
% 	\includegraphics[width=0.8\columnwidth]{./Figures/E11aP9}
% 	\caption{Histogram of error accuracy}
% 	\label{fig:E11aP9}
% \end{figure}
% \begin{figure}
% 	\centering
% 	\includegraphics[width=0.8\columnwidth]{./Figures/E11aP10}
% 	\caption{Histogram of error accuracy}
% 	\label{fig:E11aP10}
% \end{figure}
% \begin{figure}
% 	\centering
% 	\includegraphics[width=0.8\columnwidth]{./Figures/E11bP4}
% 	\caption{Noise Quasilinear Gain}
% 	\label{fig:E11bP4}
% \end{figure}

% conference papers do not normally have an appendix
\appendix \label{appen}
The integral in \eqref{n1e2} can be simplified further to give a closed form expression:

{\small \[
\begin{gathered}
N_{1}	\\
=\int_{u'_{2\textrm{min}}}^{\infty}\frac{\sqrt{2}e^{-\frac{{u'_{2}}^{2}}{2}}}{4\sqrt{\pi}}\left(\mathrm{erf}\left(\frac{\sqrt{2}\left(\mu_{1}-\alpha+\mu_{2}+\sigma_{2}u'_{2}+\rho\sigma_{1}u'_{2}\right)}{2\sigma_{1}\sqrt{1-\rho^{2}}}\right)\right.\\
	\left.+\mathrm{erf}\left(\frac{\sqrt{2}\,\left(\mathrm{\beta}-\mu_{1}+\mu_{2}+\sigma_{2}\,u'_{2}-\rho\,\sigma_{1}\,u'_{2}\right)}{2\sigma_{1}\sqrt{1-\rho^{2}}}\right)\right)du'_{2}\\
	=\frac{1}{2\sqrt{\pi}}\int_{\frac{u'_{2\textrm{min}}}{\sqrt{2}}}^{\infty}e^{-{u''}_{2}^2}\mathrm{erf}\left(K_{1}u''_{2}+K_{2}\right)du''_{2}\\
	+\frac{1}{2\sqrt{\pi}}\int_{\frac{u'_{2\textrm{min}}}{\sqrt{2}}}^{\infty}e^{-{u''}_{2}^2}\mathrm{erf}\left(K_{3}u''_{2}+K_{4}\right)du''_{2}\\\end{gathered}\]}
    where the substitution $\frac{u'_2}{\sqrt{2}}=u''_2$ was done and: 
    \begin{equation}\label{K1}
    K_{1}=\frac{\sigma_{2}+\rho\sigma_{1}}{\sigma_{1}\sqrt{1-\rho^{2}}}
    \end{equation}
    \begin{equation}\label{K2}
    K_{2}=\frac{\mu_{1}-\alpha+\mu_{2}}{\sigma_{1}\sqrt{2\left(1-\rho^{2}\right)}}
    \end{equation}
  	\begin{equation}\label{K3}
  	K_{3}=\frac{\sigma_{2}-\rho\sigma_{1}}{\sigma_{1}\sqrt{1-\rho^{2}}}
  	\end{equation}
    \begin{equation}\label{K4}
    K_{4}=\frac{\beta-\mu_{1}+\mu_{2}}{\sigma_{1}\sqrt{2\left(1-\rho^{2}\right)}}
    \end{equation}
Using the result from \cite{Fayed2014}, the following integral can be defined:
\begin{equation}\label{leq}
\begin{gathered}
L\left(p,a,b\right)=\int_{p}^{\infty}e^{-x^{2}}\textrm{erf}\left(ax+b\right)dx\\
=L_{1}\left(p,a,b\right)+\sum_{n=0}^{\infty}L_{2}\left(p,a,b\right)
\end{gathered}
\end{equation}
such that:
\begin{equation}\label{l1eq}
L_1\left(p,a,b\right)=\frac{\sqrt{\pi}}{2}\left[\textrm{erf}\left(\frac{b}{\sqrt{1+a^{2}}}\right)-\textrm{erf}\left(p\right)\textrm{erf}\left(b\right)\right]
\end{equation}
and
\begin{equation}\label{l2eq}
\begin{gathered}
L_{2}\left(n,p,a,b\right)=e^{-b^{2}}\left\{ \frac{\left(\frac{a}{2}\right)^{2n+1}}{\Gamma\left(n+\frac{3}{2}\right)}\left[1-P_{n+1}\left(p^{2}\right)\right]H_{2n}\left(b\right)\right.\\
\left.+\frac{\textrm{sgn}\left(p\right)\left(\frac{a}{2}\right)^{2n+2}}{\Gamma\left(n+2\right)}P_{n+\frac{3}{2}}\left(p^{2}\right)H_{2n+1}\left(b\right)\right\}
\end{gathered}
\end{equation}
    where:
\[\Phi\left(x\right)=\frac{1}{\sqrt{2\pi}}\intop_{-\infty}^{x}e^{-\frac{t^{2}}{2}}dt\] is the standard univariate normal CDF, 
\[P\left(n,x\right)=\frac{1}{\Gamma\left(n\right)}\intop_{0}^{x}t^{n-1}e^{-t}dt=1-e^{-x}\sum_{j=0}^{n-1}\frac{x^{j}}{j!}\] is the normalized incomplete Gamma function, 
\[\Gamma\left(z\right)=\intop_{0}^{\infty}x^{z-1}e^{-x}dx\] is the Gamma function, and 
\[H_{j}\left(x\right)=j!\sum_{k=0}^{\left[\nicefrac{j}{2}\right]}\frac{\left(-1\right)^{k}}{k!\left(j-2k\right)!}\left(2x\right)^{j-2k}\] is the Hermite polynomial.

For \eqref{leq} to converge, the value of $a$ should satisfy the inequality $\left|a\right|<1$. Hence, $\left|K_{1}\right|<1$ and $\left|K_{3}\right|<1$. This simplifies to:
\begin{equation} \label{rhorange}
\small -\frac{1}{2}\left(\sqrt{2-\left(\frac{\sigma_{2}}{\sigma_{1}}\right)^{2}}-\frac{\sigma_{2}}{\sigma_{1}}\right)<\rho<\frac{1}{2}\left(\sqrt{2-\left(\frac{\sigma_{2}}{\sigma_{1}}\right)^{2}}-\frac{\sigma_{2}}{\sigma_{1}}\right)
\end{equation}
such that $0<\sigma_{2}<\sigma_{1}$.

Fig. \ref{fig:rhoad} shows the admissible values of $\rho$ for which the series will converge.

\begin{figure}[t]
	\centering
	\includegraphics[width=1\columnwidth]{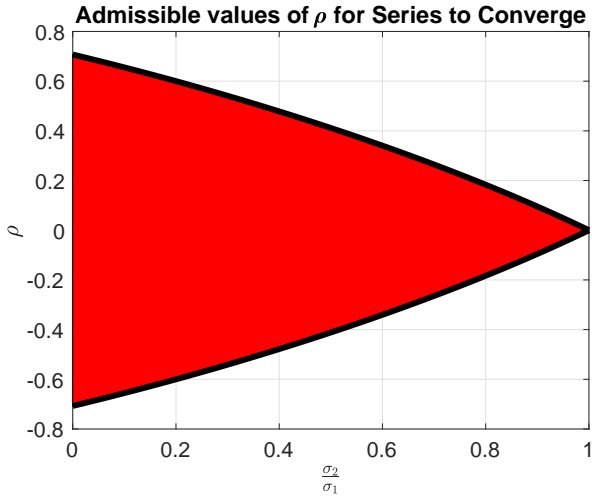}
	\caption{Admissible values for $\rho$}
	\label{fig:rhoad}
\end{figure}
Hence,
    \begin{equation}\label{N1eqf}
    {N_{1}=\frac{1}{2\sqrt{\pi}}\left[L\left(\frac{u'_{2\textrm{min}}}{\sqrt{2}},K_{1},K_{2}\right)+L\left(\frac{u'_{2\textrm{min}}}{\sqrt{2}},K_{3},K_{4}\right)\right]} 
\end{equation}
Similarly,
\begin{equation}\label{N2eqf}
N_{2}=\frac{1}{2\sqrt{\pi}}\left[L\left(\frac{u'_{2\textrm{min}}}{\sqrt{2}},K_{1},K_{2}\right)-L\left(\frac{u'_{2\textrm{min}}}{\sqrt{2}},K_{3},K_{4}\right)\right]
\end{equation}
Similarly,
\begin{equation*}
\begin{gathered}
M=\frac{\sigma_{1}\,\sqrt{1-\rho^{2}}}{\sqrt{2}\,\pi}\int_{\frac{u_{'2\textrm{min}}}{\sqrt{2}}}^{\infty}e^{-{u''}_{2}^{2}}\left({\mathrm{e}}^{-\left(K_{1}u''_{2}+K_{2}\right)^{2}}\right.\\
\left.-{\mathrm{e}}^{-\left(K_{3}u''_{2}+K_{4}\right)^{2}}\right)du''_{2}\\
+\frac{\left(\mu_{1}+\mu_{2}-\alpha\right)}{2\sqrt{\pi}}\int_{\frac{u_{'2\textrm{min}}}{\sqrt{2}}}^{\infty}e^{-{u''}_{2}^{2}}\textrm{erf}\left(K_{1}u''_{2}+K_{2}\right)du''_{2}\\
+\frac{\left(\mu_{1}-\mu_{2}-\beta\right)}{2\sqrt{\pi}}\int_{\frac{u_{'2\textrm{min}}}{\sqrt{2}}}^{\infty}e^{-{u''}_{2}^{2}}\textrm{erf}\left(K_{3}u''_{2}+K_{4}\right)du''_{2}\\
+\frac{\rho\sigma_{1}+\sigma_{2}}{\sqrt{2\pi}}\int_{\frac{u_{'2\textrm{min}}}{\sqrt{2}}}^{\infty}u''_{2}e^{-{u''}_{2}^{2}}\textrm{erf}\left(K_{1}u''_{2}+K_{2}\right)du''_{2}\\
+\frac{\rho\sigma_{1}-\sigma_{2}}{\sqrt{2\pi}}\int_{\frac{u_{'2\textrm{min}}}{\sqrt{2}}}^{\infty}u''_{2}e^{-{u''}_{2}^{2}}\textrm{erf}\left(K_{3}u''_{2}+K_{4}\right)du''_{2}\\
+\frac{1}{2\sqrt{\pi}}\int_{\frac{u_{'2\textrm{min}}}{\sqrt{2}}}^{\infty}\left(\alpha-\mu_{2}-\sqrt{2}\sigma_{2}u''_{2}\right)e^{-{u''}_{2}^{2}}du''_{2}\\
+\frac{1}{2\sqrt{\pi}}\int_{\frac{u_{'2\textrm{min}}}{\sqrt{2}}}^{\infty}\left(\beta+\mu_{2}+\sqrt{2}\sigma_{2}u''_{2}\right)e^{-{u''}_{2}^{2}}du''_{2}
    \end{gathered}
    \end{equation*}
    
Defining:
\begin{equation}
\begin{gathered}
R\left(p,a,b\right)	=\int_{p}^{\infty}e^{-x^{2}}e^{-(ax+b)^{2}}dx\\
	=\frac{\sqrt{\pi}}{2\,\sqrt{a^{2}+1}}{\mathrm{e}}^{-\frac{b^{2}}{a^{2}+1}}\left[1-\mathrm{erf}\left(\frac{pa^{2}+ba+p}{\sqrt{a^{2}+1}}\right)\right]
\end{gathered}
\end{equation}
and:
\begin{equation}
\begin{gathered}
S\left(p,a,b\right)	=\int_{p}^{\infty}xe^{-x^{2}}\textrm{erf}\left(ax+b\right)dx\\
	=\frac{1}{2}\mathrm{erf}\left(a\,p+b\right)\,e^{-p^{2}}\\
	-\frac{a}{2\sqrt{a^{2}+1}}e^{-\frac{b^{2}}{a^{2}+1}}\left[\mathrm{erf}\left(\frac{p\left(a^{2}+1\right)+ba}{\sqrt{a^{2}+1}}\right)+1\right]
\end{gathered}
\end{equation}
we get:
\begin{equation}\label{Meqf}
\small
\begin{gathered}
M	=\frac{\sigma_{1}\sqrt{1-\rho^{2}}}{\sqrt{2}\pi}\left[R\left(\frac{u'_{2\textrm{min}}}{\sqrt{2}},K_{1},K_{2}\right)-R\left(\frac{u'_{2\textrm{min}}}{\sqrt{2}},K_{3},K_{4}\right)\right]\\
	+\frac{\left(\mu_{1}+\mu_{2}-\alpha\right)}{2\sqrt{\pi}}L\left(\frac{u'_{2\textrm{min}}}{\sqrt{2}},K_{1},K_{2}\right)\\
    +\frac{\left(\mu_{1}-\mu_{2}-\beta\right)}{2\sqrt{\pi}}L\left(\frac{u'_{2\textrm{min}}}{\sqrt{2}},K_{3},K_{4}\right)\\
	+\frac{\rho\sigma_{1}+\sigma_{2}}{\sqrt{2\pi}}S\left(\frac{u'_{2\textrm{min}}}{\sqrt{2}},K_{1},K_{2}\right)+\frac{\rho\sigma_{1}
    -\sigma_{2}}{\sqrt{2\pi}}S\left(\frac{u'_{2\textrm{min}}}{\sqrt{2}},K_{3},K_{4}\right)\\
	+\frac{\alpha+\beta}{4}\left[1-\mathrm{erf}\left(\frac{\mathrm{u'_{2min}}}{\sqrt{2}}\right)\right]
\end{gathered}
\end{equation}

Fig. \ref{fig:accSat} shows the accuracy of $N_1$, $N_2$ and $M$ computed using the closed form expressions in \eqref{N1eqf}, \eqref{N2eqf} and \eqref{Meqf} compared to those obtained using the numerical integration using \eqref{n1e1}, \eqref{n2e2} and \eqref{me2}, as a function of the number of terms required in the series part of \eqref{leq}. An algorithm used for calculating $L\left(p,a,b\right)$ in \eqref{N1eqf}, \eqref{N2eqf} and \eqref{Meqf} up to a specified tolerance $tol\%$ in the series expansion is presented below.

\begin{algorithm}[H]
\begin{algorithmic}[1]
\Procedure {L}{$p$, $a$, $b$, $tol$}
\State $partConst \leftarrow $ RHS of \eqref{l1eq}
\State $partSeries \leftarrow $ \Call{L2}{$0$, $p$, $a$, $b$}
\State $pChange \leftarrow \left|\frac{partSeries}{partConst}\right|\times100$
\State $n \leftarrow 0$
\While {$pChange > tol$}
\State $n \leftarrow n+1$
\State $tn \leftarrow $ \Call{L2}{$n$, $p$, $a$, $b$}
\State $pChange \leftarrow \left|\frac{tn}{partSeries+partConst}\right|\times100$
\State $partSeries \leftarrow partSeries + tn$
\EndWhile
\State $result \leftarrow partSeries + partConst$
\State \Return $result$
\EndProcedure
\Statex
\Procedure {L2}{$n$, $p$, $a$, $b$}
\State $y \leftarrow$ RHS of \eqref{l2eq}
\State \Return $y$
\EndProcedure
\end{algorithmic}
\caption{Calculating $L(p,a,b)$ using \eqref{leq}}
\end{algorithm}

\begin{figure}[t]
	\centering
	\includegraphics[width=1\columnwidth]{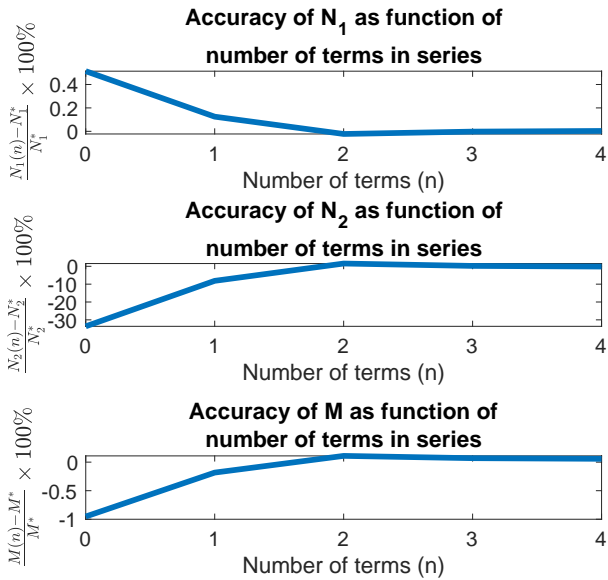}
	\caption{Accuracy of $N_1$, $N_2$ and $M$ computed using series expansion, compared with those obtained using numerical integration. In this example, $\alpha=-3$, $\beta=2$, $\mu_1=1$, $\sigma_1=0.8$, $\mu_2=1$, $\sigma_2=0.7$, $\rho=0.25$. $N_1(n)$, $N_2(n)$ and $M(n)$ refer to the values obtained using the series expansion. $N_1^*$, $N_2^*$ and $M^*$ refer to those obtained using numerical integration.}
	\label{fig:accSat}
\end{figure}
% use section* for acknowledgment

% trigger a \newpage just before the given reference
% number - used to balance the columns on the last page
% adjust value as needed - may need to be readjusted if
% the document is modified later
%\IEEEtriggeratref{8}
% The "triggered" command can be changed if desired:
%\IEEEtriggercmd{\enlargethispage{-5in}}

% references section

% can use a bibliography generated by BibTeX as a .bbl file
% BibTeX documentation can be easily obtained at:
% http://mirror.ctan.org/biblio/bibtex/contrib/doc/
% The IEEEtran BibTeX style support page is at:
% http://www.michaelshell.org/tex/ieeetran/bibtex/
\bibliographystyle{IEEEtran}
% argument is your BibTeX string definitions and bibliography database(s)
\bibliography{IEEEabrv,Mendeley.bib}
%
% <OR> manually copy in the resultant .bbl file
% set second argument of \begin to the number of references
% (used to reserve space for the reference number labels box)

% that's all folks
\end{document}